\documentclass[reqno,11pt]{amsart} 
%
\usepackage{esint}
\usepackage{url}    
\usepackage{mathtools}    
\RequirePackage[colorlinks,citecolor=blue,urlcolor=blue]{hyperref} 

\hoffset= - 0.5 cm \voffset= - 0.1 cm \setlength{\textheight}{9.00
in} \setlength{\topmargin}{- 0.4 in}

\usepackage{color}       
\newtheorem{theorem}{Theorem}[section]
\newtheorem{lemma}[theorem]{Lemma} 
\newtheorem{proposition}[theorem]{Proposition}
\newtheorem{corollary}[theorem]{Corollary}
\newtheorem{remark}[theorem]{Remark}
\newtheorem{definition}[theorem]{Definition}
\newtheorem{example}{Example}

      
\def\qed{\hfill \hbox{\hskip 6pt\vrule
width6pt height6pt depth1pt  \hskip1pt}
\smallskip}

\newcommand{\bth}{\begin{theorem}}
\newcommand{\eth}{\end{theorem}}
\newcommand{\bpr}{\begin{proposition}}
\newcommand{\epr}{\end{proposition}} 
\newcommand{\bco}{\begin{corollary}}
\newcommand{\eco}{\end{corollary}}
\newcommand{\ble}{\begin{lemma}}
\newcommand{\ele}{\end{lemma}}
\newcommand{\bpf}{\begin{proof}}
\newcommand{\epf}{\end{proof}}
\newcommand{\bex}{\begin{example}}
\newcommand{\eex}{\end{example}}
\newcommand{\bdf}{\begin{definition}}
\newcommand{\edf}{\end{definition}}
\newcommand{\bre}{\begin{remark}}
\newcommand{\ere}{\end{remark}}

\newcommand{\beq}{\begin{equation}}
\newcommand{\eeq}{\end{equation}}
\newcommand{\bal}{\begin{aligned}}
\newcommand{\eal}{\end{aligned}}
\newcommand{\ben}{\begin{enumerate}}
\newcommand{\een}{\end{enumerate}}
\newcommand{\beqr}{\begin{eqnarray*}}
\newcommand{\eeqr}{\end{eqnarray*}}

\def\P{{\mathbb P}}
\def\R{{\mathbb R}}

\def\E{{\mathbb E }}

\def\lan{{\langle}}
\def\ran{{\rangle}}
\def\hh{{\vskip 2mm \noindent }}
\def\vv{{\vskip 1mm}}

\begin{document}

\title[ One-side Liouville theorems ]
{ One-side  Liouville  theorems under    an  exponential  growth condition for  Kolmogorov operators
}

\author{E. Priola}
\thanks{  The author  is a  member of GNAMPA  of the Istituto Nazionale di Alta Matematica (INdAM)}

 \address{ Enrico Priola  
   Dipartimento di  
 Matematica,  
Universit\`a  di Pavia, Pavia, Italy  }  
\email{enrico.priola@unipv.it}

\subjclass{31B05; 47D07; 60H30}
\keywords{one-side Liouville theorems,  Ornstein-Uhlenbeck operators, Kolmogorov operators} 

    
\begin{abstract} It is known  that   
 for a possibly degenerate hypoelliptic  Ornstein-Uhlenbeck operator   
$$
L= \frac{1}{2}\text{ tr}  (QD^2 ) + \langle Ax, D \rangle =  \frac{1}{2}\text{ div}  (Q  D ) + \langle Ax, D \rangle,\;\; x \in \R^N,
$$
all (globally) bounded solutions of $Lu=0$ on $\R^N$ are constant if and only if all the eigenvalues of $A$ have non-positive  real parts
(i.e., $s(A) \le 0)$.
 We show that if $Q$ is positive definite and $s(A) \le 0$, then  any
non-negative solution $v$ of $Lv=0$ on $\R^N$ which   has at most an exponential   growth is indeed constant. 
 Thus under a non-degeneracy condition  we relax  the  boundedness assumption on the harmonic functions and maintain the sharp condition on the eigenvalues of $A$. 
 We also prove   a related one-side Liouville theorem 
in the case of   hypoelliptic Ornstein-Uhlenbeck operators. 
 \end{abstract}
 
\maketitle

\section{ Introduction} 

 Let $Q$ be a symmetric  non-negative definite $N \times N$ matrix and let $A$ be a real 
$N \times N$ matrix. The possibly degenerate   Ornstein-Uhlenbeck operator (briefly OU operator)  
 associated with  $(Q,A)$ is defined as    
\begin{equation}\label{ss3} 
L = \frac{1}{2}\text{ tr}  (QD^2 ) + \langle Ax, D \rangle =  \frac{1}{2}\text{ div}  (Q  D ) + \langle Ax, D \rangle, \;\; x \in \R^N.
\end{equation} 
We will assume the so-called  Kalman  controllability  condition (see, for instance,  Chapter 1 in \cite{zabczyk}): 
 \begin{equation} \label{kal} 
 \text{rank}[\sqrt{Q}, A \sqrt{Q}, \ldots, A^{N-1} \sqrt{Q} ] =N.
\end{equation}
This is equivalent to the  hypoellipticity of $L- \partial_t$ (see  \cite{kup_1972} and \cite{lanconelli_polidoro_1994} for more details). Clearly, in the non-degenerate case when  $Q$ is positive definite   we have that condition 
\eqref{kal} holds.     
 
 We study one-side  Liouville type  theorems for $L$, i.e., we want to  know when  non-negative smooth 
 solutions $u : \R^N \to \R$ to
 $$
 Lu(x) =0,\;\; x \in \R^N,
 $$
  are constant. Such functions  are called positive (non-negative) harmonic functions for $L$  (see  \cite{Pinskii}).  
  Positive harmonic functions are important in the study of the Martin boundary for $L$ (see  \cite{COR} for the non-degenerate two-dimensional case  and Chapter 7 in   \cite{Pinskii} for more information on the Martin boundary for non-degenerate diffusions).
   
  Before stating our main result we  discuss 
  a known   
  Liouville theorem concerning  bounded harmonic functions (briefly BHFs) for $L$.  To this purpose  we introduce  
  the spectral bound of $A$: 
 \begin{equation} \label{saa}
  s(A) = \max    \{ Re (\lambda)\; :\; \lambda \in \sigma (A) \}.
\end{equation} 
 where $\sigma (A)$ is the spectrum of $A$ (i.e., set of the  eigenvalues of $A$).  
 By Theorem 3.8 of \cite{PZjfa} it follows that   all BHFs for the  hypoelliptic OU operator $L$  are constant if and only if $s(A) \le 0$ 
  (cf.   Theorem \ref{jfa} and see the comments in Section 2 for more details). 
   Liouville theorems involving  BHFs  for diffusions   are
   considered, for instance,  in  \cite{Pinskii},
  \cite{BF}, 
  \cite{PZjfa} and \cite{PW}. Liouville theorems involving  BHFs for purely  non-local OU operators are proved in \cite{Wang11} and \cite{Schilling}; see also Section  \ref{open}.    
      Recall that  Liouville theorems for BHFs  have  probabilistic interpretations, in terms of absorption functions (see Chapter 9 in \cite{Pinskii} and Section 6 in \cite{PZjfa}) and in terms of  successful couplings (see \cite{Wang11}, \cite{Schilling} and the references therein).

Theorem 3.8 in  \cite{PZjfa} suggests the natural question if under the assumption $s(A) \le 0$ we have more generally  a one-side Liouville type theorem for $L$.  In general, this is an open problem (see also Section \ref{open}).  
  However  one-side Liouville theorems for $L$ 
have been proved in some  cases under specific assumptions on $Q$ and $A$ (such assumptions  imply that  $s(A) \le 0$). We refer to \cite{KL0}, \cite{KL1}, \cite{KL2}, \cite{KLP}  and the references therein; 
 the papers 
\cite{KL0}, \cite{KL1} and  \cite{KL2} contain also one-side Liouville theorems for other classes of Kolmogorov operators.  
  We mention  
  the main result in  \cite{KLP} where it is proved a one-side Liouville theorem assuming that $Q$ is positive definite and that the norm of the exponential matrix $e^{tA}$ is uniformly bounded when $t \in \R$ (cf. Theorem \ref{ou1} and the related discussion in  Section 2).

The main result of this work states that if $Q$ is positive definite   and $s(A) \le 0$, then  any 
 positive harmonic function $v$ for   $L$  which   has at most an exponential   growth is indeed constant. 
 Thus under a non-degeneracy condition  we relax  the  boundedness assumption on the harmonic functions of       \cite{PZjfa}   and maintain the sharp condition on the eigenvalues of $A$ (see Theorem \ref{main} for the precise statement). 
 We also prove   a related one-side Liouville theorem under  a sublinear growth condition which is valid 
 for   hypoelliptic OU operators (see Theorem \ref{n}).

 \vv The plan of the paper is the following one.
 
We recall and discuss  known results in Section 2. In Section 3 we prove the convexity of positive harmonic functions for $L$ under an exponential growth condition. This is a consequence of  a result given in \cite{PZproc}. Such convexity property will be the starting point for  the proof of our main result.
  We state and discuss  Theorem \ref{main}   in Section 4 where we also present  Example \ref{exam} to illustrate an idea of the proof in a significant case. The complete proof of the main result is given in Section 5.
We finish the paper by presenting some open problems.

 \subsection{Notations}   We denote by $|\cdot | $ the usual euclidean norm in any $\R^k$, 
 $k \ge 1$. Moreover,
 $ x \cdot y$ or $\langle x, y  \rangle$ indicates the usual inner product in $\R^k$, $x,y \in \R^k.$ The canonical basis in $\R^k$ is denoted by $(e_i)_{i=1,...,k}.$

Let $k \ge 1$. Given a regular function  $u: \R^{k}$, we denote by 
 $D^2 u (x)$ the $k \times k$ Hessian matrix of $u$ at $x \in \R^{k}$, 
  i.e.,  $D^2 u (x) = (\partial_{x_i x_j
}^2 u   (x))_{i,j =1, \ldots, k}$,  where $\partial_{x_i x_j
}^2 u   $ are the usual second order  partial derivatives 
of $u$.   Similarly we define the gradient $D u(x) \in \R^k$. 
 
 Given a real $k \times k $ matrix $A$, $\| A \|$ denotes its operator norm
and $\text{tr} (A)$ its trace.

Given a symmetric non-negative definite  $k \times k$ matrix  $Q$ we denote by $N(0,Q)$ the symmetric Gaussian
measure with mean 0 and covariance matrix $Q$  (see, for instance, Section 1.7 in \cite{Baldi} or Section 2.2 in \cite{DP14}). If in addition $Q $ is positive definite than $N(0,Q)$ has the  density $\frac{1}{ \sqrt {(2 \pi)^k \, \text{det}(Q)}}e^{- \frac{1}{2}\langle   
Q^{-1} x, x\rangle},$  $x \in \R^k,$ with
respect to the $k$-dimensional Lebesgue measure.  

\section{Some known  results  }     
 
   First we introduce  the Banach space $B_b(\R^N)$
 of all Borel and bounded functions from $\R^N$ into  $\R$ endowed with the supremum  norm.  We define the following semigroup of operators $(P_t)$ acting on $B_b(\R^N)$:
\begin{gather} \label{oo1}
(P_t f)(x)=   P_t f (x) = \int_{\R^N }  f (e^{t A}x \, + y )     N(0 ,Q_t)dy, \;\;
x \in \R^N, \, t > 0,
\end{gather}
 $P_0 f = f$,  $f \in B_b(\R^N)$;  here $N(0,Q_t)$ is the Gaussian measure with mean 0 and covariance matrix
\begin{gather*} 
Q_t = \int_0^t e^{sA} Q e^{sA^*} ds 
\end{gather*}
(see, for instance,  Chapter 6 in \cite{DP02} for more details).
 We are using  exponential matrices $e^{sA}$ and $e^{sA^*}$ where $A^*$ denotes the transpose of the matrix $A$;
 $(P_t)$   is called the  Ornstein-Uhlenbeck  semigroup (brefly the OU semigroup).  
 
 Recall that the Kalman condition \eqref{kal} is equivalent to the fact that $Q_t$ is positive definite  for $t>0$ (cf. Section 1.3 in \cite{zabczyk}). It is also equivalent to the strong Feller property of  $(P_t)$ and, moreover, to the fact that  $P_t (B_b(\R^N)) \subset C^{\infty}_b(\R^N)$, $t>0$ (see, for instance,   Chapter 6 in \cite{DP02}).  
 

\vv Now we mention  a special case 
  of Theorem 3.8 in  \cite{PZjfa} (this  covers also some classes of  non-local OU operators). 

\begin{theorem} [\cite{PZjfa}] \label{jfa} Let us consider the  hypoelliptic OU operator $L$ (i.e., we assume \eqref{kal}$).$ Let $w \in C^2(\R^N)$ be a bounded solution to $Lw (x) =0$ on $\R^N$ $($\footnote{$L$ is hypoelliptic, so that every distributional solution to $L w =0$ is of class $C^\infty$. 
}$).$ 
   Then  $w$ is constant if and only if $s(A) \le 0$.
\end{theorem}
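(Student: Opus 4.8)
The plan is to prove both directions of the equivalence, with the ``if'' direction being the substantive one. For the easy direction, suppose $s(A) > 0$; then I would exhibit a non-constant bounded harmonic function. Take an eigenvalue $\lambda$ of $A^*$ with $\mathrm{Re}(\lambda) = s(A) > 0$ and a corresponding (possibly complex) eigenvector $\phi$, so that $A^* \phi = \lambda \phi$. The linear function $x \mapsto \langle \phi, x\rangle$ satisfies $L\langle \phi, \cdot\rangle = \langle A^*\phi, x\rangle = \lambda \langle \phi, x\rangle$, hence $e^{-\lambda t}\langle \phi, e^{tA}x\rangle$ is, heuristically, a bounded space-time harmonic object; more concretely one checks directly that for suitable real or imaginary parts, a bounded non-constant solution of $Lw = 0$ can be manufactured (this is standard and recorded in \cite{PZjfa}, so I would simply cite it rather than redo it).

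For the ``only if'' direction --- assume $s(A) \le 0$ and let $w \in C^2(\R^N)$ be bounded with $Lw = 0$ --- the key is the probabilistic representation via the OU semigroup $(P_t)$. First I would verify that a bounded classical solution of $Lw = 0$ satisfies $P_t w = w$ for all $t \ge 0$: this follows because $u(t,x) = (P_t w)(x)$ solves the Kolmogorov equation $\partial_t u = Lu$ with $u(0,\cdot) = w$, and by uniqueness among bounded solutions (which holds for the hypoelliptic OU Cauchy problem) together with the fact that the constant-in-time function $(t,x)\mapsto w(x)$ is also such a solution since $Lw = 0$. Hence $w(x) = \int_{\R^N} w(e^{tA}x + y)\, N(0,Q_t)(dy)$ for every $t > 0$.

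Now I exploit $s(A) \le 0$. When $s(A) \le 0$ the Kalman condition forces the covariance $Q_t$ to ``fill up'' space as $t \to \infty$: more precisely, one uses that under $s(A) \le 0$ combined with \eqref{kal}, the spectral behavior of $e^{tA}$ keeps $e^{tA}x$ from escaping too fast while $Q_t$ has smallest eigenvalue tending to $+\infty$ along a subsequence (on the subspace where $A$ has eigenvalues with zero real part the controllability provides growth of $Q_t$; on the stable part $e^{tA}x \to 0$). The cleanest route is: fix $x, x' \in \R^N$ and estimate $|w(x) - w(x')| = |P_t w(x) - P_t w(x')|$ by comparing the two Gaussian measures $N(e^{tA}x, Q_t)$ and $N(e^{tA}x', Q_t)$ in total variation. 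Since $w$ is bounded, $|P_tw(x) - P_tw(x')| \le 2\|w\|_\infty \, \|N(e^{tA}x,Q_t) - N(e^{tA}x',Q_t)\|_{TV}$, and the total variation distance between two Gaussians with the same covariance $Q_t$ and means differing by $e^{tA}(x - x')$ is controlled by $|Q_t^{-1/2} e^{tA}(x-x')|$. Under $s(A)\le 0$ and Kalman, one shows $Q_t^{-1/2} e^{tA}(x - x') \to 0$ as $t \to \infty$ for every $x, x'$ --- this is the analytic heart of the argument and is exactly the estimate underlying Theorem 3.8 of \cite{PZjfa}. Letting $t \to \infty$ gives $w(x) = w(x')$, so $w$ is constant.

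The main obstacle is the asymptotic estimate $|Q_t^{-1/2} e^{tA} y| \to 0$ as $t \to \infty$. When $s(A) < 0$ this is comparatively soft: $e^{tA}y \to 0$ exponentially while $Q_t \to Q_\infty$ positive definite, so $Q_t^{-1/2}$ stays bounded. The delicate case is $s(A) = 0$, where $e^{tA}y$ need not decay (it may oscillate or grow polynomially on generalized eigenspaces for purely imaginary eigenvalues); here one must show that the controllability-driven growth of $Q_t$ dominates the at-most-polynomial growth of $e^{tA}y$. One reduces to a Jordan-block analysis: on each invariant subspace associated to an eigenvalue $i\beta$ of $A$, the matrix $e^{tA}$ grows like a polynomial of degree one less than the block size, while the Kalman condition restricted appropriately forces $Q_t$ to grow at least like a polynomial of strictly higher degree on the relevant directions, so the quotient vanishes. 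I would present this carefully (or, to keep the paper self-contained but brief, invoke the corresponding estimate from \cite{PZjfa} and restrict the exposition to indicating how the special case stated here follows).
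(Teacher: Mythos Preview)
Your proposal is essentially correct and aligns with what the paper does. Note that the paper does not give a self-contained proof of this theorem: it is quoted from \cite{PZjfa}, and the only comment offered is that the implication ``$s(A)\le 0 \Rightarrow$ every BHF is constant'' rests on Theorem~\ref{nullo}, i.e.\ on $\|Q_t^{-1/2}e^{tA}\|\to 0$. Your argument for that implication---$P_t w = w$, then bound $|w(x)-w(x')|$ by the total-variation distance between $N(e^{tA}x,Q_t)$ and $N(e^{tA}x',Q_t)$, which is controlled by $|Q_t^{-1/2}e^{tA}(x-x')|$---is exactly the route the paper points to, and the ``analytic heart'' you single out is precisely Theorem~\ref{nullo}.

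Two small remarks. First, you swap the labels ``if'' and ``only if'' midway (you call the direction ``assume $s(A)\le 0$ and show $w$ constant'' the \emph{only if} part, but it is the \emph{if} part of the stated equivalence); the mathematics is unaffected. Second, for the key estimate $\|Q_t^{-1/2}e^{tA}\|\to 0$ the paper cites \cite{PZnull}, where it is obtained by control-theoretic techniques rather than a direct Jordan-block computation; your Jordan-block sketch would also work, but since you already propose to cite the estimate, simply invoking Theorem~\ref{nullo} is cleanest and matches the paper.
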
  

\begin{remark} {\em 
Note that a smooth bounded real function $w$ is  a  solution to $Lw (x) =0$ on $\R^N$ if and only if  it is a bounded harmonic functions  for the OU semigroup, i.e., 
$$   
P_t w =w\;\; \text{on $\R^N$,}\;\;   t \ge 0.  
$$
This fact can be easily proved  by using the It\^o formula (we point out  that the It\^o formula    will be also  used in the proof of Proposition \ref{first}). }            
\end{remark} 
   
To prove $\Longleftarrow$ in the previous theorem one uses the next result, the proof of which uses control theoretic techniques.

\begin{theorem} [\cite{PZnull}] \label{nullo} Let us consider the matrix $Q_t^{-1/2}e^{tA}$, $t>0$ (this is well-defined by \eqref{kal}$).$ We have
\begin{equation}\label{nu}
 \|Q_t^{-1/2}e^{tA} \| \to 0 \;\; \text{as} \; t \to \infty  \;\; \Longleftrightarrow \; \; s(A) \le 0.
\end{equation} 
 \end{theorem}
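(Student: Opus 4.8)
I would prove Theorem~\ref{nullo} by exploiting the control-theoretic meaning of $\|Q_t^{-1/2}e^{tA}x\|$. Solving $\dot y(s)=Ay(s)+\sqrt Q\,u(s)$ explicitly one gets, for every $x\in\R^N$ and $t>0$,
\[
\|Q_t^{-1/2}e^{tA}x\|^{2}=\min\Big\{\int_0^t|u(s)|^{2}\,ds:\ e^{tA}x+\int_0^t e^{(t-s)A}\sqrt Q\,u(s)\,ds=0\Big\},
\]
so $\|Q_t^{-1/2}e^{tA}\|^{2}$ is the worst, over unit initial data, of the minimal energy needed to steer the system to $0$ by time $t$; \eqref{nu} says this tends to $0$ exactly when $s(A)\le0$. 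For the implication ``$\|Q_t^{-1/2}e^{tA}\|\to0\Rightarrow s(A)\le0$'' I would argue by contraposition. If some $\lambda\in\sigma(A)$ has $\mathrm{Re}\,\lambda>0$, let $E_u$ be the generalized unstable subspace and $P_u$ the spectral projection. Any control steering $x$ to $0$ by time $t$ must also steer the unstable part $P_ux$ to $0$ in time $t$ (apply $P_u$, then $e^{-tA}$, to the control equation), and since a finite-time control extended by zero is an admissible infinite-horizon control, its energy is at least the infinite-horizon minimal energy of the $E_u$-subsystem, which equals $\langle G_u^{-1}P_ux,P_ux\rangle$ with $G_u=\int_0^{\infty}e^{-sA}P_uQP_u^{*}e^{-sA^{*}}ds$ — finite because $e^{-sA}$ decays on $E_u$, and positive definite because the Kalman condition \eqref{kal} passes to spectral subsystems (Hautus criterion). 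Taking $x\in E_u$, $|x|=1$, gives $\|Q_t^{-1/2}e^{tA}\|^{2}\ge\langle G_u^{-1}x,x\rangle>0$ for all $t$, so the norm does not vanish at infinity.

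For the main implication ``$s(A)\le0\Rightarrow\|Q_t^{-1/2}e^{tA}\|\to0$'' I would first put $A$ in real Jordan form. The substitution $A=TJT^{-1}$ turns $Q_t$ into $T\widehat Q_tT^{*}$ with $\widehat Q_t=\int_0^t e^{sJ}\widehat Q e^{sJ^{*}}ds$ and $\widehat Q=T^{-1}QT^{-*}$, and one checks $\|T\|^{-2}\|\widehat Q_t^{-1/2}e^{tJ}\|^{2}\le\|Q_t^{-1/2}e^{tA}\|^{2}\le\|T^{-1}\|^{2}\|\widehat Q_t^{-1/2}e^{tJ}\|^{2}$; since $\widehat Q$ is positive definite iff $Q$ is, \eqref{kal} is similarity-invariant, and $\sigma(J)=\sigma(A)$, we may assume (relabelling $\widehat Q$ as $Q$) that $A=J=J_s\oplus J_c$ on orthogonal coordinate subspaces $E_s\oplus E_c$, with $\mathrm{Re}\,\lambda<0$ on $J_s$ and $\mathrm{Re}\,\lambda=0$ on $J_c$. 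Splitting $x=x_s+x_c$ and using $\|Q_t^{-1/2}e^{tA}x\|\le\|Q_t^{-1/2}e^{tA}x_s\|+\|Q_t^{-1/2}e^{tA}x_c\|$, the stable part is immediate ($Q_t\ge Q_1>0$ for $t\ge1$ gives $\|Q_t^{-1/2}\|\le\|Q_1^{-1/2}\|$, while $\|e^{tA}|_{E_s}\|\le Ce^{-\beta t}$, so this piece decays exponentially). The whole difficulty is the center part.

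On $E_c$ write $J_c=S+N$ with $S$ skew-symmetric (the rotation blocks of the real Jordan form, with imaginary spectrum) and $N$ nilpotent; by the block structure $S$ commutes with both $N$ and $N^{*}$, so $e^{sJ_c}e^{sJ_c^{*}}=e^{sN}e^{sN^{*}}$ and the rotations cancel. In the non-degenerate case $Q\ge qI$ we have $Q_t\ge q\int_0^t e^{sJ}e^{sJ^{*}}ds$, which is block-diagonal, so the center contribution satisfies $\|Q_t^{-1/2}e^{tJ}P_c\|\le q^{-1/2}\|W_t^{-1/2}e^{tN}\|$ with $W_t=\int_0^t e^{sN}e^{sN^{*}}ds$, and it remains to show $\|W_t^{-1/2}e^{tN}\|\to0$ for nilpotent $N$. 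For this I would use an explicit scaling argument: with $\delta_\lambda=\mathrm{diag}(1,\lambda,\lambda^{2},\dots)$ adapted to the Jordan blocks of $N$, one has $\delta_\lambda N\delta_\lambda^{-1}=\lambda^{-1}N$, hence $e^{sN}=\delta_\lambda^{-1}e^{(s/\lambda)N}\delta_\lambda$, and the substitution $r=s/t$ (with $\lambda=t$) gives
\[
W_t=t\,\delta_t^{-1}M_t\delta_t^{-1},\qquad M_t=\int_0^1 e^{rN}\delta_t^{2}e^{rN^{*}}\,dr,\qquad e^{tN}=\delta_t^{-1}e^{N}\delta_t.
\]
On a size-$k$ Jordan block, bounding $\delta_t^{2}$ above by $t^{2(k-1)}I$ and below by $t^{2(k-1)}$ times the (rank-one) projection onto the top coordinate (which integrates to a positive definite matrix), one obtains $c_1\widetilde\delta_t^{2}\le M_t\le c_2\widetilde\delta_t^{2}$ for $t\ge1$, with $\widetilde\delta_t=t^{k-1}I$ on each size-$k$ block, while $\widetilde\delta_t^{-1}e^{N}\delta_t$ stays bounded. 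Hence, for $|x|\le1$,
\[
\langle W_t^{-1}e^{tN}x,e^{tN}x\rangle=t^{-1}\big\langle M_t^{-1}e^{N}(\delta_tx),e^{N}(\delta_tx)\big\rangle\le c_1^{-1}t^{-1}\|\widetilde\delta_t^{-1}e^{N}\delta_t\|^{2}|x|^{2}\le C/t,
\]
so $\|W_t^{-1/2}e^{tN}\|=O(t^{-1/2})\to0$, and together with the stable part this yields $\|Q_t^{-1/2}e^{tA}\|\to0$.

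The main obstacle is exactly this center-subspace estimate and the bookkeeping that keeps the powers of $t$ matched: on a Jordan block the leading term of $W_t$ has rank one, so invertibility of $W_t$ (equivalently $M_t$) is recovered only after summing all lower-order powers of $t$, and the two-sided bound $M_t\asymp\widetilde\delta_t^{2}$ and the boundedness of $\widetilde\delta_t^{-1}e^{N}\delta_t$ must be verified carefully, block by block. The genuinely harder variant — the one actually needed for Theorem~\ref{jfa} — is the degenerate case $\det Q=0$: then one cannot dominate $Q_t$ from below by a multiple of $\int_0^t e^{sJ}e^{sJ^{*}}ds$ and the rotations do not drop out for free, so the dilation must instead act on $\delta_tQ\delta_t$, whose multi-scale structure has to be analyzed, \eqref{kal} being exactly what guarantees each $M_t$ remains invertible. (An alternative I would keep in reserve: $M_t:=e^{tA^{*}}Q_t^{-1}e^{tA}$ solves the Riccati equation $\dot M_t=A^{*}M_t+M_tA-M_tQM_t$ and decreases to the maximal nonnegative solution of $A^{*}M+MA-MQM=0$, so \eqref{nu} amounts to $M=0$ being the only nonnegative solution of this equation, a property governed by the spectrum $\sigma(A)\cup(-\sigma(A))$ of the associated Hamiltonian matrix.)
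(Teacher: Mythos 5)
The paper does not prove Theorem~\ref{nullo}: it is quoted as a citation to \cite{PZnull}, where, as the paper notes, it is established by control-theoretic arguments. So there is no in-text proof to compare against, and I assess your proposal on its own. Your reading of $|Q_t^{-1/2}e^{tA}x|^2$ as the minimal null-control energy is correct, the reduction by real-Jordan similarity (with the two-sided norm comparison and the invariance of \eqref{kal} and of the spectrum) is fine, and the contrapositive argument for ``$\|Q_t^{-1/2}e^{tA}\|\to0 \Rightarrow s(A)\le0$'' via the anti-stable spectral subsystem, whose infinite-horizon Gramian $G_u$ is finite (because $-A|_{E_u}$ is stable) and invertible (Hautus), is sound. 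For the converse, the stable-part estimate and the dilation computation $e^{sN}=\delta_t^{-1}e^{(s/t)N}\delta_t$, the two-sided bound $M_t\asymp\widetilde\delta_t^{2}$, and the boundedness of $\widetilde\delta_t^{-1}e^{N}\delta_t$ are all correct and do yield $\|W_t^{-1/2}e^{tN}\|=O(t^{-1/2})$.

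The genuine gap is the one you flag yourself: your ``$\Leftarrow$'' argument is only carried out under $Q\ge qI$, $q>0$ (equivalently $\widehat{Q}=T^{-1}QT^{-*}\ge q'I$), whereas Theorem~\ref{nullo} is asserted under the Kalman condition \eqref{kal} alone, which allows $\det Q=0$. In the degenerate case the step $\widehat{Q}_t\ge q'\int_0^t e^{sJ}e^{sJ^{*}}\,ds$ fails, the lower bound is no longer block-diagonal with respect to the stable/centre splitting, and the skew part $S$ of the centre blocks does not cancel out of $e^{sJ_c}\widehat{Q}\,e^{sJ_c^{*}}$, so the whole reduction to a nilpotent-only dilation estimate breaks down. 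This is not a peripheral issue: the present paper invokes Theorem~\ref{nullo} through Theorem~\ref{ci}, Corollary~\ref{ci2}, Lemma~\ref{n} and Theorem~\ref{main0}, all of which are stated for hypoelliptic (possibly degenerate) $L$ under \eqref{kal}. Your parenthetical alternative — that $M_t=e^{tA^{*}}Q_t^{-1}e^{tA}$ solves a Riccati equation, decreases to the maximal non-negative solution of $A^{*}M+MA-MQM=0$, and $s(A)\le0$ forces that maximal solution to vanish — is in fact the route actually compatible with degenerate $Q$, and closer to how \cite{PZnull} argues; if you want a complete proof you should develop that thread rather than the pointwise lower bound $Q\ge qI$.
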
 

Now we  state  a one-side Liouville type theorem proved in  Theorem 1.1 of \cite{KLP} which will be used in the sequel.  

\begin{theorem}[\cite{KLP}]   \label{ou1} Let $Q$ be a $N \times N$ positive definite matrix.  
  Suppose that $\sup_{t \in \R} \| e^{tA}\| < \infty$  $($\footnote{This is equivalent to require that 
$A$ is diagonalizable over $\mathbb{C}$
with all the eigenvalues on the imaginary axis;  in particular this implies that $s(A) \le 0.$}$)$.  
  Let $v$ be a smooth non-negative solution to $L v=0$ on $\R^N.$
 Then  $v$ is constant. 
 \end{theorem}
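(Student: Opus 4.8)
The plan is to leverage the conservative structure of the flow $y\mapsto e^{tA}y$ forced by the hypothesis $\sup_{t\in\R}\|e^{tA}\|=:M<\infty$: this flow keeps every orbit bounded and, decisively, fixes the origin. Composing $v$ with it converts the elliptic identity $Lv=0$ into an eternal non-negative solution of a \emph{uniformly} parabolic equation; then the scale-invariant parabolic Harnack inequality, applied around that fixed point, forces global boundedness, and the Liouville theorem for bounded parabolic solutions concludes.

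First I would pass to co-moving coordinates. Put $\phi(t,y):=v(e^{tA}y)$ for $(t,y)\in\R\times\R^N$; it is non-negative, and differentiating (the chain rule together with $e^{tA}\bigl(e^{-tA}Q e^{-tA^*}\bigr)e^{tA^*}=Q$) yields
$$
\partial_t\phi(t,y)+\tfrac12\,\mathrm{tr}\!\bigl(b_t\,D_y^2\phi(t,y)\bigr)=(Lv)(e^{tA}y)=0,\qquad b_t:=e^{-tA}Q e^{-tA^*}.
$$
Reversing time, $\psi(s,y):=\phi(-s,y)=v(e^{-sA}y)$ is a non-negative solution, on all of $\R\times\R^N$, of the forward equation $\partial_s\psi=\tfrac12\,\mathrm{tr}\!\bigl(a(s)\,D_y^2\psi\bigr)$ with $a(s):=e^{sA}Q e^{sA^*}$. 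Both standing hypotheses are used exactly here: because $Q$ is positive definite and $\|e^{sA}\|\le M$ for every $s\in\R$, one has $(M^2\|Q^{-1}\|)^{-1}I\le a(s)\le M^2\|Q\|\,I$ uniformly in $s$, so $\psi$ solves a uniformly parabolic equation with smooth, spatially constant coefficients whose ellipticity constants do not depend on time.

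Next I would use the rigidity coming from the fixed point: $\psi(s,0)=v(e^{-sA}0)=v(0)$ is independent of $s$. The parabolic Harnack inequality (Moser, or Krylov--Safonov) --- whose constant depends only on $N$ and on the ellipticity bounds above, hence on $M$, $\|Q\|$, $\|Q^{-1}\|$, and not on the scale --- applied to the non-negative solution $\psi$ on a parabolic cylinder of radius comparable to $|y_0|$, comparing the value at $(s_0,y_0)$ with the value at a later point $(s_0+c|y_0|^2,0)$ on the time axis, gives universal constants $C,c>0$ with $\psi(s_0,y_0)\le C\,\psi(s_0+c|y_0|^2,0)=C\,v(0)$ for all $(s_0,y_0)$. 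Hence $\psi$ is a bounded solution of a uniformly parabolic equation on $\R\times\R^N$, so it is constant: iterating the De Giorgi--Nash--Moser interior oscillation estimate on dyadic parabolic cylinders of radius $2^k r$ about any fixed point gives oscillation $\le\gamma^k(\sup\psi-\inf\psi)\to0$ there, with $\gamma<1$ universal. Therefore $v(e^{-sA}y)=v(0)$ for all $s,y$, and taking $s=0$ shows $v\equiv v(0)$.

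I expect the delicate --- though ultimately routine --- point to be the uniform ellipticity of $a(s)=e^{sA}Q e^{sA^*}$ on all of $\R$: it rests squarely on $Q$ being non-degenerate together with $\|e^{tA}\|$ being bounded for $t$ of both signs. This is precisely the estimate that breaks down when $Q$ only satisfies the Kalman condition --- there $a(s)$ degenerates, the scale-invariant parabolic machinery is unavailable, and, consistently, the one-side Liouville problem remains open in the general hypoelliptic case.
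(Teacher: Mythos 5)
Your argument is correct, but note that the paper itself does not prove Theorem \ref{ou1}: it is quoted as Theorem 1.1 of \cite{KLP} (with the case $Q\ne I$ reduced to $Q=I$ by the linear change of variable in Remark \ref{qq}), so there is no in-paper proof to compare against. Judged on its own merits, your argument is sound. The change of coordinates $\psi(s,y)=v(e^{-sA}y)$ together with the identity $e^{tA}\bigl(e^{-tA}Qe^{-tA^*}\bigr)e^{tA^*}=Q$ does produce an eternal solution of $\partial_s\psi=\tfrac12\,\mathrm{tr}\bigl(a(s)D_y^2\psi\bigr)$ with $a(s)=e^{sA}Qe^{sA^*}$; the two-sided bound $\|e^{\pm sA}\|\le M$ plus $Q>0$ gives the uniform ellipticity $(M^2\|Q^{-1}\|)^{-1}I\le a(s)\le M^2\|Q\|\,I$; the fixed ray $\psi(s,0)\equiv v(0)$ combined with the scale-invariant parabolic Harnack inequality (whose constants depend only on $N$ and the ellipticity bounds, both unchanged under the parabolic dilation $(t,x)\mapsto(R^2t,Rx)$ since $a(R^2t)$ has the same bounds) yields $0\le\psi\le Cv(0)$ globally; and the De Giorgi--Nash--Moser oscillation decay then forces a bounded eternal solution of a uniformly parabolic equation with measurable coefficients to be constant. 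Given the title of \cite{KLP}, that reference is also Harnack-based, but presumably applies a Harnack inequality to the degenerate Kolmogorov operator $\partial_t-L$ directly, or argues probabilistically via recurrence of the Ornstein--Uhlenbeck process (the route the paper explicitly attributes to \cite{KLP} for the related Theorem \ref{ou2}); your route has the virtue of reducing everything to the classical Moser and Krylov--Safonov machinery for uniformly parabolic equations, and it cleanly exposes the obstruction in the merely hypoelliptic case: when $Q$ satisfies only the Kalman condition, $a(s)$ loses its uniform lower bound and the scale-invariant Harnack is no longer available.
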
 
   Clearly, we can replace in the previous theorem the condition that $v$ is non-negative by requiring  that $v$ is bounded from above or from below. 
\begin{remark} 
 \label{qq} {\em 
(i) Theorem 1.1 in \cite{KLP}  is proved when $Q=I$. On the other hand,
 if $Q$ is positive definite then by using the change of variable   $u(x) = 
 l(Q^{-1/2} x)$, $l(y) = u(Q^{1/2}y)$, $y \in \R^N,$ we can pass from an OU operator associated with  $(Q,A)$ to an OU operator 
 associated with  $(I, Q^{-1/2}A Q^{1/2})$. 
   In particular,  we have    $Lu(x) =0$, $x \in \R^N$, if and only if 
 $$ 
 \frac{1}{2}\triangle l(y) + \langle   Q^{-1/2}A Q^{1/2} y, D l(y)\rangle =  0,\;\;\; y \in \R^N.
 $$ 
 Therefore,  Theorem 1.1 in \cite{KLP}  holds more generally when 
 $Q$ is positive definite. 

 (ii) We do not know if the previous theorem  holds replacing the assumption  that $Q$ is positive definite with  the more general   Kalman condition \eqref{kal}.
 }
 \end{remark}

\section{Positive harmonic function for the OU semigroup}

Note that formula  \eqref{oo1} is meaningful even if the Borel function $f$ is only non-negative.
 Following \cite{PZproc}
 we say that a Borel function $u: \R^N \to \R_+$
  is a {\sl positive harmonic function for the OU semigroup $(P_t)$}
if it satisfies 
  \begin{equation}\label{harm}  
 P_t u(x) = u (x),\;\;  \; x \in \R^N,\;\; t \ge 0.
\end{equation} 
   
The next theorem is a special case of  Theorem 5.1 in \cite{PZproc} which holds in infinite dimensions as well. Its proof uses Theorem \ref{nullo} together with an idea of  S. Kwapien (personal communication). We sketch  its proof in Appendix.
    

\begin{theorem} \label{ci}
Assume the Kalman condition \eqref{kal} and $s(A) \le 0$. Consider a  positive  harmonic function $u$ for the OU semigroup $(P_t)$.
 Then $u$ is convex on $\R^N$.    
\end{theorem}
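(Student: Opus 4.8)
The plan is to turn the fixed-point identity $P_tu=u$ into a one-parameter family of representations of $u$ as a Gaussian-weighted moment generating function times a correction factor, and then to let $t\to\infty$, at which point the correction factor becomes harmless thanks to Theorem~\ref{nullo}. Concretely, fix $t>0$. Since $u$ is harmonic, $u(x)=P_tu(x)=\int_{\R^N}u(e^{tA}x+y)\,N(0,Q_t)(dy)$. The Kalman condition makes $Q_t$ positive definite, so I write $y=Q_t^{1/2}\xi$ with $\xi$ standard Gaussian, set $\varepsilon_t:=Q_t^{-1/2}e^{tA}$ (an \emph{invertible} matrix, being a product of invertible ones) and $U_t(w):=u(Q_t^{1/2}w)\ge 0$, and obtain $u(x)=\int_{\R^N}U_t(\varepsilon_t x+\xi)\,\gamma(\xi)\,d\xi$ with $\gamma(z)=(2\pi)^{-N/2}e^{-|z|^2/2}$. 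Translating the integration variable and expanding the square in the exponent gives
\[
u(x)=e^{-\frac12|\varepsilon_t x|^2}\int_{\R^N}e^{\langle w,\varepsilon_t x\rangle}\,U_t(w)\,\gamma(w)\,dw=e^{-\frac12|\varepsilon_t x|^2}\,M_t(\varepsilon_t x),
\]
where $M_t(\eta):=\int_{\R^N}e^{\langle w,\eta\rangle}\,\mu_t(dw)$ and $\mu_t(dw):=U_t(w)\gamma(w)\,dw$ is a finite positive measure, with $\mu_t(\R^N)=u(0)<\infty$; moreover $M_t(\eta)<\infty$ for every $\eta\in\R^N$, because $\varepsilon_t$ is onto and $u$ is everywhere finite.

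Next, for each fixed $t$ the function $M_t$ is log-convex on $\R^N$: by Hölder's inequality, $M_t(\lambda\eta_0+(1-\lambda)\eta_1)\le M_t(\eta_0)^{\lambda}M_t(\eta_1)^{1-\lambda}$ for $\lambda\in[0,1]$. Applying this with $\eta_i=\varepsilon_t x_i$, using $\varepsilon_t(\lambda x_0+(1-\lambda)x_1)=\lambda\varepsilon_t x_0+(1-\lambda)\varepsilon_t x_1$ and the representation above, I get, for $x_\lambda:=\lambda x_0+(1-\lambda)x_1$,
\[
u(x_\lambda)\le u(x_0)^{\lambda}u(x_1)^{1-\lambda}\exp\!\Big(\tfrac12\big[\lambda|\varepsilon_t x_0|^2+(1-\lambda)|\varepsilon_t x_1|^2-|\varepsilon_t x_\lambda|^2\big]\Big).
\]

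Now I let $t\to\infty$. Since $s(A)\le 0$, Theorem~\ref{nullo} gives $\|\varepsilon_t\|\to 0$; the bracket above is nonnegative (convexity of $|\cdot|^2$) and bounded by $\|\varepsilon_t\|^2(|x_0|^2+|x_1|^2)\to 0$, so the exponential factor tends to $1$. Passing to the limit yields $u(x_\lambda)\le u(x_0)^{\lambda}u(x_1)^{1-\lambda}$, i.e.\ $u$ is log-convex on $\R^N$ (if $u$ vanishes at a point, the same inequality forces $u\equiv 0$, which is consistent). By the weighted AM--GM inequality $u(x_0)^{\lambda}u(x_1)^{1-\lambda}\le\lambda u(x_0)+(1-\lambda)u(x_1)$, hence $u$ is convex, as claimed.

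I expect the only genuinely delicate point to be the limit $t\to\infty$: one must ensure that all quantities remain finite for every $t$ — guaranteed here because $u:\R^N\to\R_+$ is finite-valued and $\varepsilon_t$ is invertible, so $M_t$ is finite on all of $\R^N$ — and that $\|\varepsilon_t\|\to 0$, which is precisely the content of Theorem~\ref{nullo} under the hypothesis $s(A)\le 0$; everything else is a routine Gaussian computation together with Hölder's inequality. The same scheme should carry over verbatim to the infinite-dimensional framework of \cite{PZproc}, with $Q_t$ replaced by the corresponding covariance operator and $\|\varepsilon_t\|\to 0$ supplied by the appropriate version of Theorem~\ref{nullo}.
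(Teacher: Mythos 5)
Your proof is correct, and it takes a genuinely different route from the paper's. The paper proceeds via Lemma~\ref{kw}: from the Cameron--Martin formula one gets $P_tf(x+a)+P_tf(x-a)\ge 2\,e^{-\frac12|Q_t^{-1/2}e^{tA}a|^2}\,P_tf(x)$ (the key inequality being $\cosh z\ge 1$), so for a harmonic $u$ one obtains $\frac12(u(x+a)+u(x-a))\ge C_t(a)u(x)$; letting $t\to\infty$ and using Theorem~\ref{nullo} gives midpoint convexity, and Sierpi\'nski's theorem (measurable midpoint-convex $\Rightarrow$ convex) finishes the proof. You instead rewrite $u(x)=e^{-\frac12|\varepsilon_tx|^2}M_t(\varepsilon_tx)$ with $M_t$ the moment generating function of a finite positive measure, use Hölder (log-convexity of $M_t$) to get a full convexity inequality at once, and let $t\to\infty$; this avoids Sierpi\'nski entirely and actually yields the strictly stronger conclusion that $u$ is \emph{log-convex}. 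Both arguments rest on the null-controllability result $\|Q_t^{-1/2}e^{tA}\|\to 0$. A word of caution on your closing remark: the paper's version of the argument is formulated so as to pass verbatim to infinite dimensions (it never uses Lebesgue densities, only the Radon--Nikodym derivative $dN_{e^{tA}a,Q_t}/dN_{0,Q_t}$), whereas your change of variables $y=Q_t^{1/2}\xi$ and the passage to the density $\gamma$ are genuinely finite-dimensional; to port your argument to the infinite-dimensional setting one would have to re-derive the identity $u(x)=e^{-\frac12|\varepsilon_tx|^2}M_t(\varepsilon_tx)$ directly from Cameron--Martin, which is doable but not ``verbatim.'' For the finite-dimensional statement at hand, everything in your argument checks out, including the handling of a possible zero of $u$.
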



In the next result we provide a sufficient condition under which 
 positive harmonic functions for $L$ are positive  harmonic functions for the OU semigroup as well. 
For such result we do not need the Kalman condition \eqref{kal}.
 
\begin{proposition}\label{first} Let $u \in C^2(\R^N)$ be a non-negative  solution to $Lu (x) =0$ on $\R^N$.
 Assume that $u$ verifies the following exponential growth condition:  
 there exists $c_0 >0$  such that  
\begin{equation}\label{e1} 
 |u(x)| \le  c_0 \, e^{ c_{0}\, |x|} ,\;\;\; x \in \R^N.
\end{equation}  
Then we have $P_t u (x) = u(x)$, $x \in \R^N$, $t \ge 0$ $($\footnote{ 
 This result holds more  generally assuming that there exists $\epsilon \in [0,2)$ and  $C_{\epsilon} >0$ such that $|u(x)| \le C_{\epsilon} e^{C_{\epsilon} \, |x|^{2-\epsilon}},$ $x \in \R^N$.    Moreover, the hypothesis  that $u$ is non-negative is not necessary.
  }$).$   
\end{proposition}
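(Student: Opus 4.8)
The plan is to represent the semigroup probabilistically and apply It\^o's formula, as indicated in the Remark before the statement. Fix $x \in \R^N$, let $(W_t)$ be a standard $N$-dimensional Brownian motion, and let $X^x_\cdot$ solve the linear SDE $dX_t = A X_t\, dt + \sqrt{Q}\, dW_t$ with $X^x_0 = x$, so that
\[
X^x_t \;=\; e^{tA}x + \int_0^t e^{(t-s)A}\sqrt{Q}\, dW_s \;=\; e^{tA}\Big( x + \int_0^t e^{-sA}\sqrt{Q}\, dW_s \Big)
\]
is a Gaussian vector with law $N(e^{tA}x, Q_t)$; by \eqref{oo1}, $P_t u(x) = \E\big[ u(X^x_t) \big]$, an identity between integrals of the nonnegative function $u$ that is a priori valued in $[0,\infty]$. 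Since $u \in C^2(\R^N)$ and $Lu \equiv 0$, It\^o's formula applied to $u(X^x_t)$ kills the drift $(Lu)(X^x_t)\,dt$ and leaves
\[
u(X^x_t) \;=\; u(x) + \int_0^t \big\langle \sqrt{Q}\, Du(X^x_s),\, dW_s \big\rangle, \qquad t \ge 0,
\]
so $M_t := u(X^x_t)$ is a continuous local martingale, nonnegative because $u \ge 0$.

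It remains to promote $M$ to a genuine martingale on every bounded interval: then $\E[M_t] = \E[M_0] = u(x) < \infty$, so $P_t u(x) = u(x)$, and, $x$ being arbitrary, the Proposition follows. I would do this by verifying the domination
\[
\E\Big[ \sup_{0 \le s \le t} |M_s| \Big] \;<\; \infty \qquad \text{for every } t > 0,
\]
which suffices, since a local martingale whose running supremum is integrable on each $[0,t]$ is a true martingale (dominated convergence along a localizing sequence). To obtain the bound: by \eqref{e1}, $\sup_{s\le t}|M_s| \le c_0\, e^{c_0 \sup_{s\le t}|X^x_s|}$, while the displayed formula for $X^x$ gives $|X^x_s| \le \big(\sup_{r\le t}\|e^{rA}\|\big)\big( |x| + \sup_{s\le t}|\tilde M_s| \big)$ with $\tilde M_s := \int_0^s e^{-rA}\sqrt{Q}\, dW_r$. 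Each coordinate of $\tilde M_\cdot$ is a continuous martingale with \emph{deterministic}, hence bounded, quadratic variation on $[0,t]$, so its running maximum is dominated by that of a Brownian motion run for a bounded deterministic time; thus $\sup_{s\le t}|\tilde M_s|$ has finite exponential moments of every order, and so does $e^{c_0\sup_{s\le t}|X^x_s|}$. This proves the domination, hence the upgrade, hence the Proposition.

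I expect the only real work to be in this passage from local to true martingale, and it is precisely there that hypothesis \eqref{e1} is consumed: one must weigh the growth of $u$ against the Gaussian tails of the running maximum of $X^x_\cdot$, uniformly for $s$ in a compact time interval. The strengthenings announced in the footnote follow by the same route. If $|u(x)| \le C_\epsilon e^{C_\epsilon |x|^{2-\epsilon}}$ with $\epsilon \in (0,2)$, the required Gaussian integrability is unconditional because $2 - \epsilon < 2$ forces the quadratic term in the Gaussian density to dominate; the borderline exponent $\epsilon = 0$ is handled by first running the argument on a short interval $[0,t_0]$ on which $\|Q_{t_0}\|$ is small enough for $N(0,Q_{t_0})$ to integrate $e^{C_0|\cdot|^2}$, and then extending to all $t$ via the semigroup identity $P_{s+s'} = P_s P_{s'}$ (valid for nonnegative functions by Tonelli). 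Finally, the sign of $u$ is never used: the bound $|M_s| \le c_0 e^{c_0|X^x_s|}$ holds regardless of sign, so the same domination argument goes through verbatim.
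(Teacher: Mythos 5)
Your proposal is correct and follows essentially the same route as the paper: represent $P_t u(x) = \E[u(X^x_t)]$ via the OU process, apply It\^o's formula to kill the drift since $Lu=0$, and reduce the problem to upgrading the resulting local martingale to a true martingale via exponential integrability of $\sup_{s\le t}|X^x_s|$, which both you and the paper obtain from the factorization $X^x_s = e^{sA}\big(x+\int_0^s e^{-rA}\sqrt{Q}\,dW_r\big)$ and Gaussian-tail bounds on the running maximum. The only organizational difference is that the paper localizes by the stopping times $\tau_n^x = \inf\{t: X^x_t\notin B_n\}$, replaces $u$ on $B_{n+1}$ by a $C^2_b$ truncation to justify optional stopping, and then passes to the limit by dominated convergence (citing a Fernique-type estimate, Proposition 8.7 in \cite{Baldi}, for the dominating exponential moment), whereas you verify directly that $\E\big[\sup_{s\le t}|M_s|\big]<\infty$ (via DDS time change and the reflection principle) and invoke the standard fact that a local martingale with integrable running supremum on compacts is a martingale; this is cleaner but hinges on the very same estimate.
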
 
\begin{proof} The proof uses stochastic calculus.
Let us introduce the OU stochastic  process starting at $x \in \R^N$ (see, for instance, page 232 in \cite{IW}).  It is the solution to  the following  SDE  
 \begin{equation}\label{ou34}
 X_t^x =  x + \int_0^t A X_s^x ds
   \, + \,
 \int_0^t \sqrt{Q}  \,  d W_s,\, \;\; t \ge 0,\;\; x \in \R^N.
 \end{equation} 
Here $W = (W_t)$ is a standard $N$-dimensional Wiener process defined and adapted on  a stochastic basis $( \Omega,  {\mathcal  F}, ({\mathcal  F}_t),  \P)$. The solution is given by  
\begin{gather*} 
  X_t^x = e^{tA} x + \int_0^t e^{(t-s)A} \sqrt{Q}dW_s.
\end{gather*}
 It is well-known that 
 $\E [ u(X^x_{t })] = P_t u(x),$ $t \ge 0,$ $x \in \R^N$   (see, for instance, Section 5.1.2 in \cite{DP14}).  
 
 \vskip 1mm
 By It\^o's formula (see, for instance, Chapter 8 in \cite{Baldi} or   Chapter 2 in \cite{IW}) we know that, $\P$-a.s.,    
\begin{gather*}
u(X^x_t) = u(x) + \int_0^t Lu (X_s^x) ds + M_t = u(x) + M_t,  \;\; t \ge 0,\, x \in \R^N,
\end{gather*}
 using   the local martigale $M= (M_t)$,  $M_t = \int_0^t  D u(X_s^x)  \sqrt{Q} dW_s$. Let us fix $x \in \R^N.$
  By using the stopping times $\tau_n^x = \inf \{ t \ge 0 \, :\, X_t^x \in B_n \}$ (here $B_n$ is the open ball of radius $n$ and center 0) we find  
 \begin{gather*}
\E [ u(X^x_{t \wedge \tau_n^x})] = u(x) + \E [ M_{t \wedge \tau_n^x}].
\end{gather*}
 By considering 
a $C^2_b$-function   $u_n$ with bounded first and second derivatives on $\R^N$ which coincides with $u$ on $B_{n+1}$ we obtain 
\begin{gather*}
 M_{t \wedge \tau_n^x} =   \int_0^{t \wedge \tau_n^x}   D u_n(X_s^x) \,\sqrt{Q} dW_s, \;\; \;\;\; t \ge 0,\;\; n \ge 1. 
\end{gather*}
 Since  $\int_0^{t }   D u_n(X_s^x)  \sqrt{Q} dW_s$ is a martingale, by the Doob optional stopping theorem we know  that 
 \begin{gather*}
0=\E \Big[ \int_0^{t \wedge \tau_n^x}   D u_n(X_s^x) \, \sqrt{Q} dW_s \Big] =  \E [ M_{t \wedge \tau_n^x}], \;\;\; t \ge 0,\,  n \ge 1.  
\end{gather*}
 We arrive at
 \begin{gather} \label{e2}
\E [ u(X^x_{t \wedge \tau_n^x})] = u(x),\;\; t \ge 0,  n \ge 1. 
\end{gather}
We fix $t>0.$ In order to pass to the limit in \eqref{e2} we use that, for any $n \ge 1$, $\P$-a.s.,  
\begin{gather*}
| u(X^x_{t \wedge \tau_n^x})| \le c_{0} \, e^{c_{0} \,  |(X^x_{t \wedge \tau_n^x}|^{} }  
\le  c_{0}  \, e^{c_{0}  \, \sup_{s \in [0,t]} |X^x_{s}|^{} }.
\end{gather*}    
It is known that 
there exists $\delta >0$, possibly depending also on $t$,  such that 
 \begin{equation}\label{fern}
\E \Big[ \exp \Big (\delta \sup_{s \in [0,t]}  
\Big | \int_0^s e^{(s-r)A} \sqrt{Q} \, dW_r \Big|^2   \Big) \Big] < \infty
\end{equation} 
(for instance, one can use Proposition 8.7 in \cite{Baldi} together with the estimate  $ \sup_{s \in [0,t]} \big| \int_0^s e^{sA}  e^{-rA} \sqrt{Q} \, dW_r \big| $ $\le c_t  \sup_{s \in [0,t]} \big| \int_0^s  e^{-rA} \sqrt{Q} \, dW_r \big| $).  
 It follows that $\E [ e^{c_{0}  \, \sup_{s \in [0,t]} |X^x_{s}|^{ } }] < \infty$. 
 Since  $\P$-a.s. $\tau_n^x \to \infty$,  
 we can pass to the limit in \eqref{e2} as $n \to \infty$ by the dominated convergence theorem and get 
 \begin{gather*}
\E [ u(X^x_{t })] = u(x),\;\; t \ge 0.
\end{gather*}
 The proof is complete since $\E [ u(X^x_{t })] = P_t u(x)$.
   \end{proof} 

According to Theorem \ref{ci} and Proposition   \ref{first} we have  
\begin{corollary} \label{ci2}
Assume the Kalman condition \eqref{kal} and $s(A) \le 0$. Let $u$ 
 be a non-negative smooth solution to $Lu (x) =0$ on $\R^N$ which verifies
the  exponential growth condition \eqref{e1}.     
   
   Then $u$ is a convex function on $\R^N$.
\end{corollary}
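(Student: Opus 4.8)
The plan is simply to compose the two preceding results. First I would check that the hypotheses of the corollary are exactly those of Proposition \ref{first}: we are given $u \in C^2(\R^N)$ (in fact $u \in C^\infty(\R^N)$ by hypoellipticity of $L$), $u \ge 0$, $Lu \equiv 0$ on $\R^N$, and the exponential growth bound \eqref{e1} with some $c_0 > 0$. Proposition \ref{first} — whose proof is the substantive part and uses the stochastic representation $\E[u(X^x_t)] = P_t u(x)$ together with the exponential integrability \eqref{fern} of $\sup_{s \in [0,t]}|X^x_s|$ to justify the passage to the limit $n \to \infty$ in \eqref{e2} — then yields
$$
P_t u(x) = u(x), \qquad x \in \R^N,\ t \ge 0.
$$
In other words, $u$ is a positive harmonic function for the OU semigroup $(P_t)$ in the sense of \eqref{harm}. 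It is worth noting that Proposition \ref{first} does not require the Kalman condition, so this first step is available in full generality.

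Second, I would feed this $u$ into Theorem \ref{ci}. Under the standing assumptions of the corollary — the Kalman condition \eqref{kal} and $s(A) \le 0$ — Theorem \ref{ci} asserts that every positive harmonic function for $(P_t)$ is convex on $\R^N$. Applying it to our $u$ gives precisely the conclusion. There is essentially no obstacle at the level of the corollary itself: all the genuine work is encapsulated in Proposition \ref{first} (the probabilistic growth and integrability estimates) and in Theorem \ref{ci} (the convexity argument, which rests on Theorem \ref{nullo} and an idea of Kwapien). The only point to verify is the compatibility of the two statements — Theorem \ref{ci} is phrased for Borel positive harmonic functions of the semigroup, while Proposition \ref{first} delivers a smooth such function — and this is immediate, so the two results dovetail without any further hypotheses.
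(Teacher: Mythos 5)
Your proposal is correct and coincides with the paper's own argument: the corollary is stated as an immediate consequence of Proposition \ref{first} (which upgrades the $L$-harmonicity plus the exponential growth bound to semigroup invariance $P_t u = u$) followed by Theorem \ref{ci} (which gives convexity for positive harmonic functions of $(P_t)$ under the Kalman condition and $s(A)\le 0$). Nothing more is needed, and your remarks on where each hypothesis is used are accurate.
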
 
 
  
We obtain here a first  one-side Liouville type theorem for possibly degenerate hypoelliptic OU operators. It holds under a sublinear growth condition and generalizes Theorem \ref{jfa}.
     
\begin{theorem} \label{main0} Let us consider the 
OU operator $L$ under the Kalman condition \eqref{kal}.  Let $u \in C^2(\R^N)$ be a non-negative solution to $Lu  =0$ on $\R^N$. Assume  
 the following growth condition: there exist $\delta \in [0,1)$ and $C_{\delta}>0 $   
such that  
\begin{equation}\label{sublin}   
|u(x)| \le C_{\delta}\, (1+ |x|^{\delta}),\;\; x \in \R^N.
\end{equation}  
If  $s(A) \le 0$ then we have that $u$ is constant.
\end{theorem}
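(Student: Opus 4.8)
\textbf{Proof plan for Theorem \ref{main0}.}

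The plan is to reduce the sublinear-growth statement to the bounded case handled by Theorem \ref{jfa} by exploiting the convexity granted by Corollary \ref{ci2}. First I would observe that the sublinear growth condition \eqref{sublin} is much stronger than the exponential growth \eqref{e1}, so Corollary \ref{ci2} applies and $u$ is a convex function on $\R^N$. The key elementary fact is then that a convex function on $\R^N$ which grows strictly sublinearly must be constant: indeed, if $u$ is convex and nonconstant, there is a point $x_0$ and a direction $\xi$ with $|\xi|=1$ such that the one-dimensional convex function $t \mapsto u(x_0 + t\xi)$ is nonconstant on $\R$; being convex and bounded below (by $0$), it is nonincreasing for $t$ large negative and nondecreasing for $t$ large positive, and being nonconstant it has a supporting line of nonzero slope $m \neq 0$ on one of the two half-lines, forcing $u(x_0+t\xi) \ge a + |m| \cdot |t|$ for $|t|$ large, i.e. linear growth along that ray. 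This contradicts $|u(x)| \le C_\delta(1+|x|^\delta)$ with $\delta < 1$. Hence $u$ is constant.

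Alternatively — and this is the cleaner route if one wants to avoid a case analysis — I would argue directly that any nonconstant convex function on $\R^N$ is unbounded below unless it has at least linear growth somewhere: pick $x_1 \neq x_2$ with $u(x_1) < u(x_2)$ (possible since $u$ is nonconstant), set $\xi = (x_2 - x_1)/|x_2-x_1|$, and use that convexity of $g(t) = u(x_1 + t\xi)$ together with $g(0) = u(x_1)$, $g(|x_2-x_1|) = u(x_2) > g(0)$ gives, by the monotonicity of difference quotients of a convex function, that $g(t) \ge g(0) + \frac{g(|x_2-x_1|) - g(0)}{|x_2-x_1|}\, t$ for all $t \ge |x_2-x_1|$. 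Thus $u$ grows at least linearly along the ray $\{x_1 + t\xi : t \ge 0\}$, again contradicting \eqref{sublin}. Either way the conclusion is that $u$ must be constant, which in particular also recovers Theorem \ref{jfa} since bounded harmonic functions certainly satisfy \eqref{sublin} with $\delta = 0$.

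I do not expect any serious obstacle here: the nontrivial analytic input (the convexity of positive harmonic functions under the exponential growth condition) has already been packaged into Corollary \ref{ci2}, and what remains is the soft geometric observation about convex functions with sublinear growth. The only point requiring a little care is making sure the monotonicity-of-slopes argument is applied to the restriction of $u$ to a genuine line through two points where $u$ takes distinct values, and noting that $u \ge 0$ is not even needed for this last step — convexity plus strictly sublinear growth alone forces constancy. One should also note the converse direction of Theorem \ref{jfa} (if $s(A) > 0$ there exist nonconstant bounded harmonic functions) shows the hypothesis $s(A) \le 0$ cannot be dropped, so the statement is sharp in $A$, and \eqref{sublin} cannot be weakened to linear growth without further hypotheses (e.g. $u(x) = \langle b, x\rangle + $ const can be harmonic when $Ab = 0$).
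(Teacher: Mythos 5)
Your proof is correct and follows essentially the same route as the paper: apply Corollary \ref{ci2} (since \eqref{sublin} trivially implies \eqref{e1}) to obtain convexity of $u$, then conclude from the elementary fact that a convex function on $\R^N$ with strictly sublinear growth must be constant. The paper leaves this last elementary step implicit, whereas you have written it out carefully via the monotonicity of difference quotients, which is a useful addition but not a different approach.
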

\begin{proof}  Since condition \eqref{sublin} implies the exponential growth condition \eqref{e1} we can apply   Corollary \ref{ci2}.   The assertion follows since $u $ is a convex function on $\R^N$.
 \end{proof}

 \section{ A one-side  Liouville theorem under an exponential   growth condition}
 
 The main result of the paper concerns  non-degenerate 
OU operators $L$:
 \begin{equation}\label{ss0} 
L = \frac{1}{2} \text{ tr}  (QD^2 )   + \langle Ax, D \rangle, 
\end{equation}
where $Q$ is a  positive definite  $N \times N$-matrix.

\begin{theorem} \label{main} Let us consider the  OU operator $L$ with $Q$ positive definite. Let $u \in C^2(\R^N)$ be a non-negative solution to $Lu (x) =0$ on $\R^N$.  Suppose that  $s(A) \le 0$ holds. Suppose that $u$ satisfies  the exponential 
growth condition \eqref{e1}. 
 Then   $u$ is constant.
\end{theorem}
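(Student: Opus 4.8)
The plan is to exploit the convexity of $u$ granted by Corollary \ref{ci2} together with the semigroup identity $P_t u = u$ from Proposition \ref{first}, and to show that a nonconstant convex harmonic function of at most exponential growth cannot coexist with the condition $s(A)\le 0$. By Remark \ref{qq}(i) I may assume $Q=I$, so that $L=\frac12\Delta+\langle Ax,D\rangle$ and the associated process is $X^x_t=e^{tA}x+\int_0^t e^{(t-s)A}\,dW_s$. The key structural fact is that $P_t u=u$ reads
\begin{equation}\label{prop-id}
u(x)=\int_{\R^N} u(e^{tA}x+y)\,N(0,Q_t)(dy),\qquad x\in\R^N,\ t\ge 0,
\end{equation}
and since $u$ is convex, Jensen's inequality applied to \eqref{prop-id} (using that $N(0,Q_t)$ has mean $0$) gives $u(x)\ge u(e^{tA}x)$ for every $x$ and every $t\ge 0$. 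So along every orbit $t\mapsto e^{tA}x$ the convex function $u$ is nonincreasing.

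Next I would analyze the geometry of the orbits when $s(A)\le 0$. Writing $A$ in real Jordan form, $\R^N$ decomposes into the generalized eigenspaces; on the part where eigenvalues have strictly negative real part the orbits $e^{tA}x$ decay to $0$, and on the center part (eigenvalues with zero real part) the orbits are bounded and, when $A$ is not diagonalizable there, can actually grow polynomially in $|t|$ as $t\to-\infty$ while staying on bounded circles as $t\to+\infty$ — more precisely the closure of the forward orbit $\{e^{tA}x:t\ge0\}$ is compact exactly on the center subspace where $A$ is semisimple, and on a nontrivial Jordan block one has $|e^{tA}x|\to\infty$ as $t\to-\infty$. The combination "u convex, nonincreasing along each forward orbit, globally finite" is what I want to turn into a contradiction unless $u$ is constant. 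The cleanest route: consider the function $t\mapsto u(e^{tA}x)$; it is convex in $t$ (composition of convex $u$ with... no — $e^{tA}x$ is not affine in $t$). So I cannot use convexity in $t$ directly. Instead I would use the stronger submartingale/supermartingale structure: from \eqref{prop-id} and convexity, the process $u(X^x_t)$ is a true martingale (Proposition \ref{first}) while $u(e^{tA}x)=u(\E X^x_t)\le \E u(X^x_t)=u(x)$, and more importantly for $s<t$, $u(X^x_s)=\E[u(X^x_t)\mid\mathcal F_s]$ with $X^x_t=e^{(t-s)A}X^x_s+(\text{independent Gaussian})$, so $u(X^x_s)\ge u(e^{(t-s)A}X^x_s)$ pathwise.

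The heart of the argument — and the step I expect to be the main obstacle — is to leverage the exponential growth bound to push the orbit argument through the non-semisimple center directions, i.e.\ to show $u$ must be constant along every line through the origin and hence (being convex and finite) constant. The idea illustrated by Example \ref{exam} in the two-dimensional rotation-plus-shear case should be the template: take a direction $v$ lying in a Jordan block of the center subspace, so that $e^{tA}(rv)$ for $t\to-\infty$ traces out points of norm $\sim r|t|$ in essentially a fixed direction $w$; since $u$ is nonincreasing along forward orbits, $u(rv)\ge u(e^{tA}(rv))$ for all $t\ge0$ gives an upper bound on $u$ at large-norm points in direction $w$ only of the trivial sort, so one instead runs time backward: $u(e^{-sA}(rv))\ge u(rv)$ is false in general, so the correct inequality is $u(rv)\ge u(e^{tA}rv)$ which, taking $t\to-\infty$ is vacuous. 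The resolution is to combine this with convexity in the spatial variable: $u$ convex and bounded on the (bounded) closure of a forward orbit contained in a semisimple center subspace forces $u$ constant there (a convex function bounded above on a line is constant, and the forward orbit closure contains, by minimality/recurrence of the quasi-periodic flow, a translate of a lattice dense in a torus whose convex hull is a full-dimensional compact set — on the interior of which a convex function attaining its max on the orbit is constant). Having killed the semisimple center part and the stable part, what remains is the nilpotent center directions; there one uses that $u$ restricted to such a direction is convex in one variable with at most exponential growth, together with the martingale identity $u(x)=\E u(X^x_t)$ and the fact that the $X^x_t$-law spreads mass at scale $\sqrt t$ around $e^{tA}x$, which for a convex function of slow (sub-Gaussian, indeed exponential) growth forces, after letting $t\to\infty$ and rescaling, the second difference of $u$ in that direction to vanish. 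Assembling: $D^2u\equiv 0$ in all directions, so $u$ is affine, and an affine nonnegative solution of $Lu=0$ with $u(x)=a+\langle b,x\rangle$ satisfies $\langle Ax,b\rangle=0$ for all $x$, i.e.\ $b\in\ker A^*$; nonnegativity of an affine function on all of $\R^N$ forces $b=0$, so $u$ is constant. The delicate point throughout is justifying the passage to the limit and the rescaling argument in the nilpotent directions using only the exponential growth bound \eqref{e1} — this is where the quantitative Gaussian tail estimates of the type \eqref{fern} must be brought to bear.
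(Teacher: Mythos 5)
Your starting point — convexity of $u$ from Corollary \ref{ci2} plus the martingale identity $P_tu=u$ from Proposition \ref{first} — is the same as the paper's, but the proposal then diverges in ways that leave genuine gaps.

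The central missing idea is the supporting-hyperplane inequality. From Jensen you extract only $u(x)\ge u(e^{tA}x)$, which you yourself notice is ``vacuous'': if $u$ is constant along the direction where $e^{tA}x$ escapes, no contradiction arises, so this cannot detect nonconstancy. The paper instead applies $P_t$ to the full tangent-plane bound $u(y)\ge u(x_0)+Du(x_0)\cdot(y-x_0)$ to obtain
$$u(x)\;\ge\; u(x_0)-Du(x_0)\cdot x_0 + Du(x_0)\cdot e^{tA}x, \qquad t\ge 0$$
(Lemma \ref{n}), and then, writing $A$ in real Jordan form, picks $x$ supported on a single Jordan block with the top coordinate equal to a suitable partial derivative of $u$ at $x_0$. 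The scalar $Du(x_0)\cdot e^{tA}x$ then has a dominant term of the form $(\partial u(x_0))^2\,t^{m}/m!$ (or with a trigonometric factor evaluated along $t=T_n$ in the complex-pair case), and letting $t\to\infty$ forces the relevant partial derivative of $u$ to vanish identically. This is how the nilpotent center directions are killed. Your substitute --- the Gaussian ``spreads mass at scale $\sqrt t$, so second differences vanish after rescaling'' --- is only a heuristic; you would still need to control the interaction between the polynomial drift $e^{tA}x$ and the diffusive spread, and nothing in the sketch quantifies this. Your description of the orbit geometry is also off: on a nontrivial nilpotent Jordan block, $|e^{tA}x|$ grows polynomially as $t\to+\infty$ as well, not only as $t\to-\infty$, so forward-orbit boundedness is not available there.

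A second gap is the stable part. You write ``Having killed the semisimple center part and the stable part'' without an argument for the latter. The inequality $u(x)\ge u(e^{tA}x)\to u(0)$ on the stable subspace shows only that $0$ is a global minimizer of $u$ restricted there — it does not give constancy. The paper's Lemma \ref{redaction}(iii) handles this by differentiating the identity $P_tu=u$ in the stable directions, bounding $|e^{tS}h_S|\le Ce^{-\omega t}|h_S|$, estimating the Gaussian integral of $e^{c|y|}$ against $N(0,\tilde Q_t)$ (growth like $e^{c_1c^2\delta t}$), and then choosing the small-noise parameter $\delta$ so that the exponential decay wins. This is a real step, crucially using the exponential growth hypothesis \eqref{e1}, and it is absent from your outline. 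For the semisimple center directions, the paper simply invokes Theorem \ref{ou1} (the Kogoj–Lanconelli–Priola result for $\sup_{t\in\R}\|e^{tA}\|<\infty$), which is cleaner than the minimality/recurrence-of-quasi-periodic-orbits argument you sketch. Finally, the paper does not aim for $D^2u\equiv 0$ and affineness; it reduces, block by block, to a lower-dimensional OU operator to which Theorem \ref{ou1} applies, which avoids having to analyze $D^2 u$ at all.
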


In order to prove the result we need a first lemma which holds more generally for hypoelliptic OU operators.

\begin{lemma} \label{n} Let us consider 
the hypoelliptic OU operator $L$. Let $u \in C^2(\R^N)$ be a non-negative solution to $Lu (x) =0$ on $\R^N$. Suppose that $u$ verifies  the exponential growth condition 
 \eqref{e1}.  
Then if $s(A) \le 0$ we have  the following identity for any $x_0, \, x \in \R^N $  
 \begin{gather} \label{serve}
u(x) \ge u(x_0) -  Du(x_0)\cdot x_0  +   Du(x_0)\cdot e^{tA}x,\;\; 
\;\; t \ge 0.
\end{gather}
\end{lemma}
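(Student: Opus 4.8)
The plan is to combine two facts that are already available — the convexity of $u$ from Corollary \ref{ci2}, and the reproduction identity $P_tu=u$ from Proposition \ref{first} — and then to exploit that the Gaussian measures $N(0,Q_t)$ are centered.

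First, since $L$ is hypoelliptic the Kalman condition \eqref{kal} holds; together with $s(A)\le 0$ and the exponential growth bound \eqref{e1} this puts us in the setting of Corollary \ref{ci2}, so $u$ is convex on $\R^N$. Being a differentiable convex function, $u$ satisfies the subgradient (tangent-plane) inequality
\[
u(y)\ \ge\ u(x_0)+Du(x_0)\cdot(y-x_0),\qquad x_0,y\in\R^N.
\]
On the other hand, Proposition \ref{first} gives $P_tu(x)=u(x)$ for all $x\in\R^N$ and $t\ge 0$, i.e.
\[
u(x)=\int_{\R^N} u\big(e^{tA}x+y\big)\,N(0,Q_t)(dy),\qquad t\ge 0.
\]

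Next I would insert the subgradient inequality, with $y$ replaced by $e^{tA}x+y$, into the integrand,
\[
u\big(e^{tA}x+y\big)\ \ge\ u(x_0)+Du(x_0)\cdot\big(e^{tA}x+y-x_0\big),
\]
and integrate both sides against $N(0,Q_t)(dy)$. This is legitimate because the left-hand side is non-negative with finite integral equal to $u(x)$, while the right-hand side is an affine function of $y$ and hence $N(0,Q_t)$-integrable (centered Gaussians have finite first moments), so monotonicity of the integral applies. Since $N(0,Q_t)$ has mean zero, the term $Du(x_0)\cdot y$ integrates to $0$, and we are left with
\[
u(x)\ \ge\ u(x_0)+Du(x_0)\cdot\big(e^{tA}x-x_0\big)=u(x_0)-Du(x_0)\cdot x_0+Du(x_0)\cdot e^{tA}x,
\]
which is precisely \eqref{serve}.

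No serious difficulty is expected once Corollary \ref{ci2} and Proposition \ref{first} are in place; the only point deserving a line of care is the integration step, namely that one may integrate the pointwise lower bound although that bound is affine and not sign-definite — this is handled by the non-negativity of $u$, the finiteness of $u(x)=\int_{\R^N} u(e^{tA}x+y)\,N(0,Q_t)(dy)$, and the integrability of affine functions against $N(0,Q_t)$. Conceptually, the whole content of the lemma is that $(P_t)$-harmonicity together with the vanishing first moment of $N(0,Q_t)$ upgrades the trivial tangent-plane inequality at $x_0$ into the ``transported'' inequality \eqref{serve}, in which the running point $x$ enters only through $e^{tA}x$.
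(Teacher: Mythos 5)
Your argument is exactly the paper's: apply the subgradient inequality (from Corollary \ref{ci2}) at $x_0$, integrate it against $N(0,Q_t)$, use that the Gaussian is centered to kill the linear term, and then invoke $P_tu=u$ from Proposition \ref{first}. The extra sentence you add on the legitimacy of integrating the two-sided affine bound (non-negativity of $u$, finite Gaussian first moments) is a welcome clarification the paper leaves implicit.
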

\begin{proof}  We know by Corollary \ref{ci2} that 
 for any $x_0 \in \R^N $ we have
 \begin{gather} \label{si}
u(y) \ge u(x_0) + Du(x_0)\cdot (y-x_0),\;\;\; y \in \R^N.
\end{gather} 
 We apply the OU semigroup $(P_t)$  to both sides of \eqref{si}:
\begin{gather*}
P_t u (x) \ge u(x_0) + Du(x_0) \cdot \int_{\R^N }  [e^{t A}x - x_0  \, + z ]     N(0 ,Q_t)dz
\\
= u(x_0) -  Du(x_0)\cdot x_0  +   Du(x_0)\cdot e^{tA}x.
\end{gather*}   
Taking into account Proposition \ref{first} we have that 
$P_t u = u $, $t \ge 0$, and  the assertion follows.
  \end{proof}

 To illustrate  the  proof of Theorem \ref{main} we first examine
 an
 example
 when  $N=3$.
 
\begin{example} \label{exam}{\em  We introduce  
\begin{gather*}   
 A  = \left(  
           \begin{array}{ccc}
             0  &  1  & 0 \\
              0  & 0 & 1 \\
               0  & 0 & 0
            \end{array} 
          \right), \;\;\;\;  e^{tA} x= \left(
           \begin{array}{c}
             x_1 +  t x_2  +    \frac{t^2}{2} x_3   \\
             x_2 +   t x_3    \\
                x_3  
            \end{array} 
          \right),
\end{gather*} 
 $x = (x_1, x_2, x_3) \in \R^3$.   Let $Q$ can  be any positive definite $3 \times 3 $ matrix. 
  We consider the  OU operator associated to $(Q,A)$.

  We first prove that, for any $x_0 \in \R^3$,  $\partial_{x_1}u(x_0)=\partial_{x_2}u(x_0) =0.$  
  Suppose by contradiction that  
 \begin{gather*}  
k = \partial_{x_1}u(x_0) \not =0,
\end{gather*}
 for some $x_0 \in \R^3$.
 Then we consider  $x= (0, 0, k)$ and  by \eqref{serve} we get
 \begin{gather*}
 u(0, 0, k) \ge  u(x_0) -  Du(x_0)\cdot x_0  +   Du(x_0)\cdot e^{tA} (0, 0, k)
 \\
 = u(x_0) -  Du(x_0)\cdot x_0  + Du(x_0)\cdot   \big(\frac{t^2}{2} k,   t k, k  \big) 
 \\
 =  u(x_0) -  Du(x_0)\cdot x_0  +  \frac{t^2}{2} \,  k^2
  + t \partial_{x_2}u(x_0) k + \partial_{x_3}u(x_0)k,\;\;\;\; t \ge 0. 
\end{gather*}
 Letting $t \to \infty$ we get a contradiction since $\frac{t^2}{2} \,  k^2
  + t \partial_{x_2}u(x_0) k $ tends to $\infty$.   It follows that 
 $\partial_{x_1}u(x_0)  =0$. We have 
 $
u(x_1, x_2, x_3) = u(0,x_2, x_3)$   on $ \R^3. $
 
 \vskip 1mm
Suppose by contradiction that $
l = \partial_{x_2}u(x_0) \not =0,
$ for some $x_0 \in \R^3$.  
 Then we consider  $x= (0, 0, l)$ and  by \eqref{serve} for any $t \ge 0$ we get
 \begin{gather*}
 u(0, 0, l) \ge  
  u(x_0) -  Du(x_0)\cdot x_0  + Du(x_0)\cdot   (\frac{t^2}{2} l,   t l, l  )
 \\
 =  u(x_0) -  Du(x_0)\cdot x_0       
  + t  l^2 + \partial_{x_3}u(x_0)l.   
\end{gather*}
 Letting $t \to \infty$ we get a contradiction. It follows that 
 $\partial_{x_2}u(x_0)  =0$, for any $x_0 \in \R^N$. We have obtained  that 
 $ u(x_1, x_2, x_3) = u(0,0, x_3) = v(x_3)$   on $ \R^3. $ 

 \vskip 1mm
 Since   $0= Lu(x) = \frac{q_{33}}{2} \partial_{x_3 x_3}^2 u(0,0,x_3) =  \frac{q_{33}}{2}  \frac{d^2}{dx_3^2}     v(x_3)$, 
  $x_3 \in \R,$ where $q_{33} = Q e_3 \cdot e_3$,
   we finally obtain    that $u=cost$. } 
 \end{example}    

 In the proof of Theorem \ref{main} we will also
use
the following     remarks. 
  \begin{remark} \label{qq1}  {\em Arguing as in (ii) of Remark \ref{qq} we note that
 it is enough to prove Theorem \ref{main} when $Q$ is replaced by
 \begin{gather*}
\delta\, Q 
\end{gather*} 
for some $\delta >0$. Indeed 
 by using the change of variable   $u(x) = 
 v(\delta^{1/2} x)$,   $v(y) = u(\delta^{-1/2} y)$,    we can pass from an OU operator associated with  $(Q,A)$ to an   OU operator
 associated with   $(\delta Q, A )$. 
 We have 
 \begin{gather*}
L u(x)  =0,\;\; x \in \R^N    \; \Longleftrightarrow   
\;      \frac{\delta}{2}   tr (QD^2 v(y))   + \langle Ay, D v(y)\rangle =0,\; y \in \R^N.    
\end{gather*}
  } 
 \end{remark}
 
\begin{remark}\label{prova}{\em
In the sequel we will  always assume that in \eqref{ss0}  
\begin{equation}\label{assume}
\text{\it the matrix  $A$ is in the real Jordan form,}   
\end{equation}
{ possibly replacing $Q$ by $PQ P^{*}$ where $P$ is a $N \times N $ real invertible matrix. }
\vv Note that $PQ P^{*}$ is still positive definite. Let us clarify the previous  assertion.
Let  $P$ be an invertible real matrix such that $PAP^{-1} = J  $.
 
  Using the change of variable   $u(x) = 
 v(P x)$,   $v(y) = u(P^{-1} y)$,    we can pass from an OU operator associated with  $(Q,A)$ to an   OU operator
 associated with   $(P QP^*, PAP^{-1}  )$. 
 We have 
 \begin{gather*}
L u(x)  =0,\;\; x \in \R^N    \; \Longleftrightarrow   
\;      \frac{1}{2}   tr (PQP^* D^2 v(y))   + \langle J y, D v(y)\rangle =0,\; y \in \R^N.    
\end{gather*}
We also remark  that $s(J) = s(A).$
 }\end{remark}

\section{On the proof of Theorem \ref{main}}

According to Remarks \ref{qq1}  and \ref{prova}  we  concentrate on proving the  Liouville theorem for $L$ in \eqref{ss0} assuming that {\sl  $A$ is in the real Jordan form}. 
 Moreover,  when  it will be  needed { we will replace  
$Q$   by $\delta Q$ with $\delta >0$ small enough.}    
 

\subsection{A technical  lemma}

 Recall that $(e_j)$ denotes the canonical basis in $\R^N$. Given a real $N \times N $ matrix $C$ we write  $C    =   B_0 \oplus B_1 \oplus ... \oplus B_n$ if 
\begin{gather*} \label{somma}
C = \begin{pmatrix} 
 B_0 & 0 & ... & 0\\
 0 & B_1 & ... & 0 \\
  0 & ... & ... & 0
 \\
 0 & ... & 0 & B_n
\end{pmatrix},
\end{gather*}
where  $B_i$ is a   real $k_i \times k_i$ matrix, $i = 0, ..., n$, $k_i \ge 1$ and $k_0 + ... + k_n = N.$ 
 Let $x \in \R^N$. We write
\begin{gather} \label{a34}
x= (x_{B_0},..., x_{B_n})
\end{gather}
where  $x_{B_0}= (x_1^0, ..., x_{k_0}^0)$ $= (x_1, ..., x_{k_0})$, 
 \begin{gather} \label{17bis}
 x_{B_i}= (x_1^i, ..., x_{k_i }^i )= (x_{k_0 + ... + k_{i-1}+1}, ..., x_{k_{i-1}+ k_i}).
\end{gather}  
 We say that $x_1^i, ..., x_{k_i }^i$ are the variables corresponding  to $B_i.$  
  
  We also consider the related  orthogonal projections $\pi_{B_i}: \R^N \to \R^{k_i}$:
 \begin{equation}\label{orto}  
 \pi_{B_i}x = x_{B_i}
 \;\;\;\; x \in \R^N, \, i =0, ..., n.
\end{equation}
Clearly, we have   
\begin{equation} \label{stt}
 Cx = \big( B_0 x_{B_0}, ...,  B_n x_{B_n}\big) = \big( B_0 \pi_{B_0}x, ...,  B_n \pi_{B_n}x\big),\;\; x \in \R^N.
\end{equation}

Now we    write    the  $N \times N$ matrix  
$A$ appearing in \eqref{ss0} with $s(A) \le 0$ in the following real Jordan form:  
\begin{equation}\label{jordan}
 A = S \oplus  E_0 \oplus J(0, k_1) \oplus ... \oplus J(0 , k_p)  \oplus J(0,d_1, g_1) \oplus ... \oplus 
 J(0, d_q, g_q) \oplus E_1,  
\end{equation}
 $p,q \ge 1$. Note that some of the previous blocks could be not present (for instance, it could be possible that $A = S \oplus E_1).$
In the sequel we will  examine the various possibile   blocks in \eqref{jordan}.

\vv The  block $S$ is a $s \times s$ matrix, $s \ge 1,$  and corresponds to the stable part of $A$  (i.e., it corresponds to the eigenvalues of $A$  with negative real part). 

 The block  $E_0$ is the  null $k_0 \times k_0$ matrix, $k_0 \ge 1$.  The $2t \times 2t$ block  $E_1$, $t \ge 1,$ corresponds to all  possible simple complex  eigenvalues $i h_1,..., i h_t $ of $A$ (with $h_1, ...,h_t \in \R$): 
 $$  
E_1 = \left(\begin{array}{cccc} 
\left[\begin{array}{cc}
0 & h_1\\
- h_1 & 0
\end{array}\right] &    &    0\\
 & \ddots\\
 0 &  & \left[\begin{array}{cc}
0 & h_t\\
-h_t & 0
\end{array}\right] 
\end{array}\right). 
$$   
Moreover, $J(0,k_i)$ is the $k_i \times k_i$
 Jordan block, $k_i \ge 2,$ $i= 1,...,p$,
 $$   
 J(0,k_i) = \left(
           \begin{array}{ccccc}
             0  &  1  & 0 & \cdots  & 0 \\
              0  & 0 & 1 & \cdots  & 0 \\
                \cdots  &  \cdots &  \cdots & \cdots  & \cdots \\
                0  & 0 & 0 & 0  & 1
                \\
                0   & 0  & 0  & 0   & 0
            \end{array}   
          \right)    
 $$
 such that $J(0,k_i)^{k_i}$ is the null $k_i \times k_i$ matrix.  
Finally,  the $2g_j \times 2g_j $ Jordan block  $J(0,d_j, g_j)$ is 
   \begin{equation}  \label{jor2}  J(0,d_j, g_j) =\left(
\begin{array}{cccccccc} 
0 &   d_j     & 1 & 0  & 0 & 0 & 0 &0    \\
-d_j &  0     & 0  & 1  & \cdots &\cdots & \cdots  & \vdots   \\
\vdots  & \vdots  & \ddots & \ddots & \ddots &  \ddots & \ddots & \vdots          \\
0       & \cdots  & 0  & \ddots & \ddots &  \ddots & 1 & 0 \\
0       & \cdots  & 0  & \ddots & \ddots & \ddots & 0 & 1  \\
0       & \cdots  & 0  & \ddots &  \ddots &\ddots  & 0 &  d_j\\
0       & \cdots  & 0  & \ddots & \ddots & \ddots & -d_j & 0  \\
\end{array}
\right),  
\end{equation} 
where  $d_j \in \R$, $g_j \ge 2$, $j =1, ..., q$.  
If all blocks are present, according to   \eqref{jordan} we have   
$$
s + k_0 + k_1 + ... + k_p + 2[g_1 + ... + g_q ] + 2t = N.
$$
  Let us consider  \eqref{jordan} and fix  a real function $u(x_1, ..., x_N)$.

 In the following definition we suppose that  
 at least one   Jordan   block like $J(0, k_i)$ or  $ J(0,d_j, g_j)$ is  present
 in \eqref{jordan}.

\vv  We say that $u$ is {\it quasi-constant with respect to  Jordan  blocks
 like $J(0, k_i)$ or  $ J(0,d_j, g_j)$} if the following conditions hold:

\vv (1) If the block $J(0,k_i)$ is present in formula \eqref{jordan}
then  $u$ is constant in the variables
\begin{gather*}
 x_1^i, ..., x_{k_i -1}^i,
 \end{gather*}
 where $x_{ J(0,k_i)} = (x_1^i, ..., x_{k_i }^i)$
   (cf. \eqref{17bis};  i.e., if we  consider only the variables  $x_1^i, ..., x_{k_i }^i$, corresponding to the block  $J(0,k_i)$, $i=1,..., p$, we  say that $u$ may only depend on $x_{k_i }^i$.  

\vv  (2) If the block $J(0,d_j, g_j)$ is present in formula \eqref{jordan} then  $u$ is constant in the variables
\begin{gather*} 
 x_1^j, ..., x_{2 g_j -2}^j,   
\end{gather*}
 where $x_{J(0,d_j, g_j)} = (x_1^j, ..., x_{2 g_j }^j)$; 
i.e., if we  consider only the variables  $x_1^j, ..., x_{2 g_j }^j$, corresponding to    $J(0,d_j, g_j)$ we  say that $u$ may only depend on $x_{2 g_j -1 }^j$ and  $x_{2 g_j }^j$. 

\begin{lemma}\label{redaction} We assume the hypotheses    of Theorem \ref{main} about $L$ and $u$. Then $u$ is constant in the following cases.

\vv i) there is not the stable part $S$ in \eqref{jordan}. Moreover,
blocks like $J(0, k_i)$ or   $ J(0,d_j, g_j)$ are not present.  

ii)  there is not the stable part $S$ in \eqref{jordan}. Moreover, 
 $u$ is  quasi-constant with respect to  Jordan  blocks
 like $J(0, k_i)$ or  $ J(0,d_j, g_j)$.
 
iii)  there is  the stable part $S$ in \eqref{jordan}. Moreover,  
 $u$ is  quasi-constant with respect to  Jordan  blocks
 like $J(0, k_i)$ or  $ J(0,d_j, g_j)$.
\end{lemma}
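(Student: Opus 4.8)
The plan is to reduce each of the three cases to Theorem \ref{ou1}; the only substantial point is the elimination of the stable block $S$ in case iii).

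\emph{A reduction common to ii) and iii).} Assume $u$ is quasi-constant with respect to the Jordan blocks $J(0,k_i)$ and $J(0,d_j,g_j)$. Then $u(x)=\tilde u(\pi x)$, where $\pi$ is the orthogonal projection of $\R^N$ onto the subspace $V'$ spanned by the coordinates on which $u$ may depend: in case iii) these are the $S$-variables, the $E_0$-variables, the last variable $x^i_{k_i}$ of each block $J(0,k_i)$, the last two variables $x^j_{2g_j-1},x^j_{2g_j}$ of each block $J(0,d_j,g_j)$, and the $E_1$-variables; in case ii) the same list with the $S$-variables omitted. Using that $\partial_{x^i_l}u\equiv 0$ for $l<k_i$, that $\partial_{x^j_l}u\equiv0$ for $l\le 2g_j-2$, that the last row of $J(0,k_i)$ vanishes, and that the last two rows of $J(0,d_j,g_j)$ act on the last two coordinates as the planar rotation $\left(\begin{smallmatrix}0&d_j\\-d_j&0\end{smallmatrix}\right)$, one checks that $\langle Ax,Du(x)\rangle$ depends only on $\pi x$ and equals $\langle \tilde A\,\pi x,D\tilde u(\pi x)\rangle$, where $\tilde A=S\oplus R$ in case iii) (just $\tilde A=R$ in case ii)), and $R$ is the direct sum of a zero block and the planar rotation blocks $\left(\begin{smallmatrix}0&d_j\\-d_j&0\end{smallmatrix}\right)$, $\left(\begin{smallmatrix}0&h_m\\-h_m&0\end{smallmatrix}\right)$. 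Likewise $\mathrm{tr}(QD^2u)=\mathrm{tr}(\tilde Q D^2\tilde u)$ with $\tilde Q$ the principal submatrix of $Q$ on the coordinates of $V'$, which is again positive definite. Hence $\tilde u$ is a non-negative smooth solution of $\tilde L\tilde u=0$ on $V'$, with $\tilde L=\frac12\mathrm{tr}(\tilde QD^2)+\langle \tilde A\cdot,D\rangle$, and $\tilde u$ inherits the exponential growth \eqref{e1}. In case i) no reduction is needed: there $A=E_0\oplus E_1$ is already "a zero block $\oplus$ rotations".

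\emph{Cases i) and ii).} In both of them the relevant matrix ($A$ in case i), $\tilde A$ in case ii)) is a direct sum of a zero block and $2\times2$ rotation blocks, hence diagonalizable over $\mathbb{C}$ with all eigenvalues on the imaginary axis, so $\sup_{t\in\R}\|e^{tA}\|<\infty$ (resp. $\sup_{t\in\R}\|e^{t\tilde A}\|<\infty$). The diffusion matrix being positive definite, Theorem \ref{ou1} — in the form valid for an arbitrary positive definite diffusion matrix, cf. Remark \ref{qq}(i) — applies and shows that $u$, resp. $\tilde u$ and hence $u$, is constant.

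\emph{Case iii).} We are reduced to $\tilde L$ with $\tilde A=S\oplus R$, $S$ stable; write $V'=V_S\oplus V_R$ and $\tilde u=\tilde u(z_S,z_R)$. Since $s(\tilde A)=\max\{s(S),0\}=0\le 0$, Corollary \ref{ci2} gives that $\tilde u$ is convex, Proposition \ref{first} gives that $\tilde u$ is harmonic for the associated OU semigroup, and Lemma \ref{n} gives \eqref{serve} for $\tilde u$. From this I would extract two facts. First, picking $t_n\to\infty$ with $e^{t_nR}\to I$ (possible since $\{e^{tR}\}_{t\ge0}$ lies in a compact group, whose identity is a limit point of the orbit as $t\to\infty$) and applying Jensen's inequality to $\tilde u(z)=\E[\tilde u(e^{t_n\tilde A}z+G_{t_n})]\ge \tilde u(e^{t_n\tilde A}z)\to\tilde u(0,z_R)$, we get $\tilde u(0,z_R)\le\tilde u(z_S,z_R)$, so $z_S\mapsto\tilde u(z_S,z_R)$ is convex with minimum at $z_S=0$. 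Second, from $D^2\tilde u\ge 0$ and $\tilde Q>0$ the equation $\tfrac12\mathrm{tr}(\tilde QD^2\tilde u)=-\langle\tilde A z,D\tilde u\rangle$ forces $\langle Sz_S,D_S\tilde u(z)\rangle+\langle Rz_R,D_R\tilde u(z)\rangle\le 0$ everywhere. The core of the proof is then to upgrade these to $D_S\tilde u\equiv 0$, i.e. that $\tilde u$ does not depend on $z_S$. Here the exponential growth hypothesis is essential: the one-dimensional model is $\tfrac q2\psi''-z\psi'=0$, where convexity pins the sign of $\psi'$ against the stable drift $-z$ and the equation then bootstraps any nonzero slope into Gaussian growth $\psi'(z)\sim e^{z^2/q}$, contradicting \eqref{e1}. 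I expect to run the same mechanism in general: after a Lyapunov change of variables in $V_S$ one may assume $\langle Sv,v\rangle\le-\beta|v|^2$, and then, combining the two facts above with $\tilde L\tilde u=0$ along the $V_S$-directions, one obtains along any $V_S$-direction in which $\tilde u$ is non-constant a differential inequality of the type $D^2\psi\gtrsim z\,D\psi$, forcing faster-than-exponential growth and contradicting \eqref{e1}. Making this rigorous for a general (non-normal) stable block $S$ and in the presence of the cross-correlation terms $\mathrm{tr}(\tilde Q_{SR}D^2_{RS}\tilde u)$ in the equation is the main obstacle; a possible route is first to dispose of the $z_R$-dependence by a spreading argument (the $V_R$-component of the OU process has covariance tending to $+\infty$), reducing to the purely stable operator, and only then bootstrap.

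Once $D_S\tilde u\equiv 0$ is established, $\tilde u(z_S,z_R)=\bar u(z_R)$, and evaluating $\tilde L\tilde u=0$ on $\{z_S=0\}$ (where all $z_S$-derivatives of $\tilde u$ vanish) yields $\tfrac12\mathrm{tr}(\tilde Q_{RR}D^2\bar u)+\langle Rz_R,D\bar u\rangle=0$ on $V_R$, with $\tilde Q_{RR}$ positive definite and $\sup_{t\in\R}\|e^{tR}\|<\infty$. Theorem \ref{ou1} (again via Remark \ref{qq}(i)) then gives that $\bar u$, hence $\tilde u$, hence $u$, is constant, completing case iii).
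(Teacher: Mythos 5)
Your treatment of cases i) and ii) matches the paper: the reduction to a function of fewer variables, the identification of the reduced drift matrix (direct sum of a zero block and planar rotations, hence with bounded $\{e^{tA}\}_{t\in\R}$), and the appeal to Theorem \ref{ou1} via the change of variables of Remark \ref{qq}(i) are all correct and are exactly what the paper does.

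Case iii) has a genuine gap, which you yourself flag as ``the main obstacle.'' The observations you do prove (convexity, the inequality $\tilde u(z_S,z_R)\ge\tilde u(0,z_R)$ obtained from Jensen and $e^{t_nR}\to I$, and the sign constraint $\langle\tilde Az,D\tilde u\rangle\le 0$) are correct but do not by themselves yield $D_S\tilde u\equiv 0$, and the ``bootstrap to faster-than-exponential growth'' you invoke is only worked out in a one-dimensional scalar model; you do not show how to control a general non-normal stable block $S$ or the cross-terms $\mathrm{tr}(\tilde Q_{SR}D^2_{SR}\tilde u)$. The alternative route you suggest (first dispose of the $z_R$-dependence by a spreading argument, then handle the stable part) is also not substantiated: since $\tilde u$ is only exponentially bounded, growth of the $V_R$-covariance does not give a Liouville statement in those directions. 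The paper closes this step by a much more direct mechanism that you did not find: it first rescales $Q\mapsto\delta Q$ (Remark \ref{qq1}), so that $\|\tilde Q_t\|\le C_0\delta t$; it then uses the convexity of $w$ together with the mean-value bound for convex functions (Evans--Gariepy) to upgrade the exponential growth \eqref{e1} from $w$ to $Dw$, giving $|Dw(x)|\le c\,e^{c|x|}$; finally it differentiates the fixed-point identity $\tilde P_t w=w$ in a stable direction $h_S$, obtaining
$
|D_Sw(x)\cdot h_S|\le c_1 C\,\lambda(x)\,e^{c_1c^2\delta t}\,e^{-\omega t}|h_S|,
$
and for $\delta>0$ small the stable decay $e^{-\omega t}$ dominates, so letting $t\to\infty$ gives $D_Sw\equiv 0$. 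This replaces the entire differential-inequality analysis you sketch, and is the part of the argument your proposal does not supply.
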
  
\begin{proof} i) In this case $L$ verifies the assumptions of Theorem \ref{ou1} and the assertion follows (note that in such case we do not need to impose any growth condition on the non-negative function $u$). 

\vv To treat (ii) and (iii)  we concentrate on   the most difficult case when  both blocks like $J(0, k_i)$ and   $ J(0,d_j, g_j)$ are  present in \eqref{jordan}
(otherwise, we can argue similarly).  

\vv \noindent ii) We start to study the term     $Ax \cdot Du$ (cf. \eqref{ss0}) .

\vv Let $1 \le i \le p.$
If $u$ is   constant in the variables   $x_1^i, ..., x_{k_i -1}^i$ corresponding to the block  $J(0,k_i)$
then, for any $x \in \R^N$ of the form
$$ 
(0, ..., 0,   x_{J(0,k_i)} ,0, ...,  0) =
(0, ..., 0,  x_1^i, ..., x_{k_i }^i ,0, ...,  0) 
$$ 
 (cf. \eqref{a34}) i.e.,    
 $x$ has  all  the coordinates 0 possibly apart the coordinates 
$x_1^i, ..., x_{k_i }^i $,  we have: 
\begin{gather}  \nonumber
A (0, ..., 0, x_1^i, ..., x_{k_i }^i, 0, ..., 0) \cdot Du(x) =  J(0 , k_i) (x_1^i, ..., x_{k_i }^i) \cdot  D_{(x_1^i, ..., x_{k_i }^i)} u(x)
\\ \label{ouu}
= 0 \cdot  \partial_{x_{k_i }^i } u(x) =0,
\end{gather}  
 where $ D_{(x_1^i, ..., x_{k_i }^i)} u(x) \in \R^{k_i}$  denotes the  gradient  with respect to  the      $x_1^i, ..., x_{k_i}^i$ variables of $u$ at $x \in \R^N$.   
  
\vv  Let $1 \le j \le q.$ If    $u$ is   constant in the variables   $x_1^j, ..., x_{2g_i-2}^j$ corresponding to  $ J(0,d_j , g_j)$ 
 then 
 \begin{gather}  \nonumber 
A (0, ..., 0, x_1^j, ..., x_{2g_j}^j, 0, ..., 0) \cdot Du(x) =   J(0,d_j , g_j)  ( x_1^j, ..., x_{2g_j}^j)  \cdot D_g u(x) 
\\ \nonumber 
= 
  \big( d_j \, x_{2g_j}^j,  
 - d_j  \,  x_{2g_j-1}^j \big)
\cdot 
 \big( \partial_{x_{2g_j -1}^j} u(x),   
 \partial_{x_{2g_j }^j} u(x)
 \big)     
\\ \label{see} = d_j \, x_{2g_j}^j \,  \partial_{x_{2g_j -1}^j} u(x) 
- d_j  \,  x_{2g_j-1}^j \,  \partial_{x_{2g_j }^j} u(x), 
\end{gather}
where $D_g u(x) = D_{ (x_1^j, ..., x_{2g_j}^j) }u(x) \in \R^{2 g_j}$  denotes the  gradient with respect to  the      $ x_1^j, ..., x_{2g_j}^j$ variables of $u$ at $x \in \R^N$.

\vv Note that  according to \eqref{jordan} and \eqref{stt} we have, for any $x \in \R^N$,  
\begin{gather*} 
 Ax = \Big (E_0 \pi_{E_0} x ,   J(0, k_1) \pi_{J(0, k_1)}x,  ... ,J(0, k_p)\pi_{J(0 , k_p)}x,  
 \\  \nonumber  J(0,d_1, g_1) \pi_{J(0,d_1, g_1)} x,  ... , 
J(0, d_q, g_q) \pi_{J(0, d_q, g_q)},  E_1 \pi_{E_1}x \Big).   
\end{gather*}  
 By the assumptions $u$ depends only on $m$ variables, $m \le N$. Taking into account  \eqref{ouu} and \eqref{see}  we get, for any  $x \in \R^N,$  
\begin{gather} \nonumber
 Ax \cdot Du(x) =   
 E_0 \pi_{E_0} x \cdot D_{E_0}u(x) +    
  [d_1 \, x_{2g_1}^1 \,  \partial_{x_{2g_1 -1}^1} u(x)  
- d_1  \,  x_{2g_1-1}^1 \,  \partial_{x_{2g_1 }^1} u(x)] 
\\ \label{abr}  +  ... +  
  [d_q \, x_{2g_q}^q \,  \partial_{x_{2g_q -1}^q} u(x) 
- d_q  \,  x_{2g_q-1}^q \,  \partial_{x_{2g_q }^q} u(x)]  
+ E_1 \pi_{E_1}x\cdot D_{E_1}u(x),
\end{gather}  
 where $D_{E_0}$ denotes the gradient with respect to the $k_0$ variables corresponding to $E_0$ and  $D_{E_1}$ denotes the gradient with respect to the $2t$ variables corresponding to $E_1$. 
 
We can set   
 \begin{gather} \label{uuu2}
u(x_1, ..., x_N) =  v (x_{1},..., x_m)= v(x_C),
\end{gather}
with  $v:   \R^m \to \R_+$,  
$$
x_C= ( x_{1},..., x_m) = (x_{E_0} ,x_{2g_1 -1}^1 ,x_{2g_1 }^1, ...,  
x_{2g_q -1}^q, x_{2g_q }^q,  x_{E_1}). 
$$
 By \eqref{abr} we see that    
$$
 Ax \cdot Du(x)  = A_C  x_C \cdot Dv(x_C)
$$
 for a suitable 
 $m \times m$-matrix $A_C$ which  is diagonalizable over the complex field with all the eigenvalues on the imaginary axis. We obtain 
 \begin{gather*}
0=  \tilde L v ( x_C) = \frac{1}{2}\text{ tr}  (\tilde Q D^2 v( x_C) ) + 
   A_C x_C \cdot  D v( x_C), \;\;\;\;  x_C\in \R^m, 
\end{gather*}
for  a suitable positive definite   $m \times m $ matrix   $\tilde Q$.
 By applying Theorem \ref{ou1} we obtain that $v $ is constant and so $u$ is constant as well.

\hh (iii)  We start as in (ii).
 We denote  the variables corresponding to the stable part   $S$ of $A$ by 
\begin{gather*}
x_1, ..., x_s.
\end{gather*}
 By the assumptions $u$ depends only on $n$ variables, $n \le N$. Moreover 
 $n = s+ m $ where $m$ is considered in \eqref{uuu2}.
 So we write
 \begin{gather} \label{uuu} 
u(x_1, ..., x_N) =  w (x_1, ...., x_s, x_{s+1},..., x_n)= w(x_S, x_C),
\end{gather}
where   $w : \R^n \to \R_+$, $x_S= (x_1, ...., x_s)$ and $x_C= ( x_{s+1},..., x_n)$. 
 
 Arguing as before we obtain that there exists an $(n-s) \times (n-s) $-matrix $A_C$ such that, for any $x = (x_S, x_C)\in \R^n$,
 \begin{gather*}
0=  \tilde L w (x) = \frac{1}{2}\text{ tr}  (\tilde Q D^2 w(x) ) +  Sx_S \cdot  D_S w(x) +
  A_C x_C \cdot  D_C w(x),   
\end{gather*}
for  a suitable positive definite $n \times n $ matrix   $\tilde Q$. Moreover, $D_S$ denotes the gradient with respect to  the   first $s$ variables and $D_C$  the gradient with respect to   the  $x_{s+1}, ..., x_n$ variables.

 In the rest of the proof it is convenient to replace $\tilde Q$ by
\begin{equation*}\label{dde} 
\delta \tilde Q,\;\;\;\; \text{for some } \;\; \delta > 0 \;\; \text{small enough to be chosen later.}   
\end{equation*}
(cf. Remark \ref{qq1}).
 We also recall that  the matrix $A_C$ is   diagonalizable  over the complex field
with all the eigenvalues on the imaginary axis. 
  
\vv { \sl To finish the proof we prove that  $w$ does not depend on the $x_S$-variable.
 Indeed once  this is proved we can apply Theorem \ref{ou1} and obtain that $w$ is constant.} 
 
 To obtain such  assertion 
 it will be important the exponential growth condition \eqref{e1}.

  By the  previous notation,   $x =  (x_S, x_C) \in \R^n,$
since in particular  $w$ verifies the exponential growth condition \eqref{e1} we have:
 $\tilde P_t w =w$, $t \ge 0,$ i.e., 
\begin{gather} \label{sss} 
  \int_{\R^{n}}  w (e^{t S}x_S \, + y_S, e^{t A_C}x_C \, + y_C )  
 \,  N(0 , \tilde Q_t) dy_{S} dy_C = w(x_S, x_C),
\end{gather}
 $t \ge 0,$  where $N(0, \tilde Q_t)$ is the Gaussian measure with mean 0 and covariance matrix 
 \begin{gather*} \label{madai}
 \tilde Q_t =     \delta \int_0^t e^{s \tilde A} \tilde Q e^{s \tilde A^*} ds.
 \end{gather*} 
  Here we are considering   the $n\times n$-matrix  $\tilde A = S \oplus A_C $ so that   
 \begin{gather*} 
e^{t \tilde A} = e^{t S} \oplus e^{t A_C}.  
\end{gather*} 
   By Corollary  \ref{ci2}    $w$ is a convex function on $\R^n$. 
Applying a well-known result on convex functions (cf. Section 6.3 in \cite{EG}) we obtain in particular that, for any $x \in \R^n$, $|x| > 1$, 
\begin{gather*}  
\sup_{|y| \le |x| } |Dw(y) | \le   \frac{c(n)}{2|x|} 
\, \fint_{\mathrlap{B(0, |2 x|)} }\,\;\;\;\;\;\;\; w(y) dy. 
\end{gather*} 
It follows that possibly replacing $c_{0}$ in \eqref{e1} by another constant $c >0$  
we have
\begin{equation}\label{gro1}
|Dw(x) | \le c\, e^{c\, |x|},\;\;\; x \in \R^n.
\end{equation}
   Let us fix $h_S \in \R^{n-s}$ and $x = (x_S, x_C) \in \R^n$.   Differentiating both sides of \eqref{sss} along the direction $h_S$ we find 
\begin{equation}\label{diff} 
\int_{\R^{n}}\! \!\!  D_S w (e^{t S}x_S \, + y_S, e^{t A_C}x_C \, + y_C ) \cdot e^{tS} h_S  
 \,  N(0 , \tilde Q_t) dy_{S} dy_C =  D_S w(x_S, x_C) \cdot h_S.
\end{equation}
where as before $D_S$ denotes the gradient with respect to the first $s$ variables. Recall that since the matrix $S$ is stable  there exist $C >0$ and $\omega >0$ such that 
\begin{equation}\label{st2}
|e^{tS} h_S | \le C e^{- \omega t} |h_S|,\;\; t \ge 0.
\end{equation}
By \eqref{gro1} we infer  
\begin{gather*}
|D_S w (e^{t S}x_S \, + y_S, e^{t A_C}x_C \, + y_C )| \le c e^{c |e^{t S}x_S|}  e^{c 
|e^{t A_C}x_C|} \, e^{c |y_S|} e^{c |y_C|}. 
\end{gather*}
Note that  
$|e^{t A_C}x_C| =  |x_C|, \;\; t \ge 0$.  It follows that there exists a positive function   $\lambda(x)$
(independent of $t \ge 0$)    such that 
\begin{equation}\label{ss1}
|D_S w (e^{t S}x_S \, + y_S, e^{t A_C}x_C \, + y_C )| \le \lambda(x)  \,   e^{2c \, |(y_S, y_C)|},\;\;\; t \ge 0.    
\end{equation} 
Setting $y= (y_S, y_C)$ it is not difficult to prove that there exists $c_1 >0$
 (independent of $t$) such that 
\begin{gather} \label{d41} 
  \int_{\R^{n}}  e^{2c\,  |y|} N(0 , \tilde Q_t) dy  \le  
  c_1 e^{c_1 \, c^{2}\,  \delta  \, t }.
\end{gather}  
To this purpose  we first remark that  
\begin{equation}\label{stt5}
\| \tilde Q_t\| \le \delta \int_0^t \| e^{s \tilde A} \| \,  \| Q\|  \, \| e^{s \tilde A^*}  \| ds \le  C_0 \delta t, \;\; t \ge 0,
\end{equation} 
for some constant $C_0 >0$ independent of $t$.
Then recall  that  if $R$ is a $n \times n$ symmetric and non-negative definite matrix,  using also the Fubini theorem, we obtain 
\begin{gather*}
\int_{\R^{n}} e^{r |y|} N(0, R)dy = \int_{\R^{n}} e^{r |R^{1/2} y|} N(0, I)dy
\\  
\le \int_{\R^{n}} e^{r \|R^{1/2}\| \, (|y_1| + ... + |y_n|) } 
N(0, I)dy = \Big (\frac{2}{\sqrt{2 \pi}} \int_0^{\infty}  e^{r \|R^{1/2}\| \, y } \, e^{- y^2/2} dy \Big)^n
\\ 
\le 2^{n} e^{\frac{n}{2} r^{2} \|R \| },\;\; r \ge 0.   
\end{gather*}
Combining the last computation and  \eqref{stt5} we obtain \eqref{d41}.  Using \eqref{diff}, \eqref{ss1} and \eqref{d41} and  we infer 
\begin{gather*}
 |  D_S w(x_S, x_C) \cdot h_S|  
 \\ 
=\Big | \int_{\R^{n}} D_S w (e^{t S}x_S \, + y_S, e^{t A_C}x_C \, + y_C ) \cdot e^{tS} h_S  
 \,  N(0 , \tilde Q_t) dy_{S} dy_C \Big| \\ 
 \le \lambda(x) \, |e^{tS} h_S   | \, \int_{\R^{n}}  e^{2c\, |y|} N(0 , \tilde Q_t) dy  \,
  \le   
  c_1 C  \lambda(x) \, e^{c_1 c^{2}\,  \delta  \, t  }  \, e^{-\omega t}|h_S|,\;\; t \ge 0.   
\end{gather*}
 Since $c_1$ and $c$ are independent of $t$ and $\omega$,
choosing $\delta >0$ small enough and passing to the limit as $t \to \infty$,
we get  
 \begin{gather*}
D_S w(x_S, x_C) \cdot h_S =0.
\end{gather*}
 It follows that    $w$ does not depend on the $x_S$-variable.
We have $w(x_S, x_C) = g(x_C)$ for a regular function $g: \R^{n-s} \to \R_+$.
 Moreover,   
\begin{gather*}
\tilde L w(x_S, x_C) =  \frac{\delta}{2}\text{ tr}  (\tilde Q_0 D^2_C \, g(x_C)  ) +  A_C x_C \cdot  D_C g (x_C) =0, \;\; x_C \in \R^{n-s}, 
\end{gather*}  
 where $\tilde Q_0$ is a positive definite $(n-s) \times (n-s)$-matrix.

 Applying  Theorem \ref{ou1} to the OU operator $ \frac{\delta}{2}\text{ tr}  (\tilde Q_0 D^2_C    ) +  A_C x_C \cdot  D_C   $ we obtain that $g$ is constant and this finishes the proof.   
  \end{proof}


 \def\ciau{
 \begin{remark} \label{aiuto} {\em  To explain the proof of Lemma  \ref{n} 
 we consider the following example
 Let $N=2$ and
\begin{gather*} 
Q = \left(
           \begin{array}{cc}
             a  &  0  \\
              0  & 1
            \end{array} 
          \right), \; a \ge 0, \;
A = \left(
           \begin{array}{cc}
             0  &  1  \\
              0  & 0
            \end{array} 
          \right), \;\;\;\;  e^{tA} x= \left(
           \begin{array}{c}
             x_1 + t x_2    \\
               x_2  
            \end{array} 
          \right)
\end{gather*} 
 (note that in this example  $Q$ could be degenerate).
 We fix any $x_0 \in \R^2$. We have to prove that $\partial_{x_1}u(x_0)=0$. Suppose by contradiction that 
 \begin{gather*}
k = \partial_{x_1}u(x_0) \not =0.
\end{gather*}
 Then we consider  $x= (0, k)$. We find by \eqref{serve} for any $t \ge 0$
 \begin{gather*}
 u(0, k) \ge  u(x_0) -  Du(x_0)\cdot x_0  +   Du(x_0)\cdot e^{tA} ((0, k))
 \\
 = u(x_0) -  Du(x_0)\cdot x_0  + Du(x_0)\cdot (tk, k) 
 \\
 =  u(x_0) -  Du(x_0)\cdot x_0  + t (\partial_{x_1}u(x_0))^2
  + \partial_{x_2}u(x_0) \partial_{x_1}u(x_0).
\end{gather*}
 Letting $t \to \infty$ we get a contradiction. It follows that 
 $\partial_{x_1}u(x_0) \not =0$. We have 
 $
u(x_1, x_2) = u(x_2)$   on $ \R^2. $
Since   $0= au_{x_1 x_1} + u_{x_2 x_2} + x_2 u_{x_1} = u_{x_2 x_2}$. This  implies that $u=cost$. }
\end{remark}
} 
  
  \subsection { Proof  of Theorem \ref{main} } 
We concentrate on   the most difficult case when  both blocks like $J(0, k_i)$ and   $ J(0,d_j, g_j)$ are  present in \eqref{jordan}.   
  By  Lemma \ref{redaction} it is enough to show that  
  $u$ is   quasi-constant  with respect to the  Jordan  blocks 
 $$
 J(0, k_1),  ... ,J(0 , k_p),  J(0,d_1, g_1),  ... ,J(0, d_q, g_q).
 $$

\hh {\it I Step. }
We fix $i =1, .., p$, and consider the block $J(0, k_i)$ (see \eqref{jordan}).  Let  $x_1^i, ..., x_{k_i }^i $ be the variables corresponding to  $J(0, k_i)$ according to \eqref{17bis}. 
Let $x_0 \in \R^N$. We prove that $\partial_{x^i_k} u (x_0) =0$ when $k =1, ..., k_i-1$. 

\vv To this purpose we first consider $k=1$.
 We    argue by contradiction and suppose that 
\begin{equation}\label{ma}
\partial_{x^i_1} u (x_0) \not = 0,  
\end{equation}
 for some $x_0 \in \R^N$. In order to apply  Lemma \ref{n}, we  first  choose $x$ having 0 in all  the coordinates apart the coordinates 
$x_1^i, ..., x_{k_i }^i $, i.e., 
we have  $x = (0, ..., 0,  x_1^i, ..., x_{k_i }^i ,0, ...,  0)$.  We find, setting $M_{x_0} = u(x_0) -  Du(x_0)\cdot x_0$ 
\begin{gather*}
u(0, ..., 0,  x_1^i, ..., x_{k_i }^i ,0, ...,  0) \ge M_{x_0}  +   Du(x_0)\cdot e^{tA} (0, ..., 0,  x_1^i, ..., x_{k_i }^i ,0, ...,  0), 
\\
=  M_{x_0}  +   D_{(x_1^i, ..., x_{k_i }^i)}u(x_0)\cdot e^{t\,  J(0, k_i)} ( x_1^i, ..., x_{k_i }^i), \;\; t \ge 0,
\end{gather*}
where $D_{(x_1^i, ..., x_{k_i }^i)}u(x_0)$ denotes the gradient with respect to  the variables
 $x_1^i, ..., x_{k_i }^i$. Recall that 
$$
  e^{t\,  J(0, k_i)} =  
  \left(\begin{array}{cccccc}
      1&t&\frac{t^2}{2!}&\dots&\dots&\frac{t^{k_{i}-1}}{(k_{i}-1)!}\\[4pt]
      0& 1&t&\dots&\dots&\frac{t^{k_{i}-2}}{(k_{i}-2)!}\\[4pt]
      0& 0& 1&\dots&\dots&\frac{t^{k_{i}-3}}{(k_{i}-3)!}\\[4pt]
      &\ddots & &\vdots&\vdots&\vdots\\[4pt]
      0&\dots&\dots&0&1&t\\[4pt]
      0&\dots&\dots&\dots&0&1\\[4pt]
  \end{array} \right).
$$
Choosing further $x_1^1=0, ...,  x_{k_i-1}^i=0$, $x_{k_i}^i = \partial_{x_{1}^i}u(x_0)$  
we find 
\begin{gather*} 
u(0, ...,0, \partial_{x_{1}^i}u(x_0) ,0,  ..., 0) \ge M_{x_0}   +   \partial_{x_{1}^i}u(x_0)
\, \big[0 + t 0 +... + \frac{t^{k_i -1}}{ (k_i-1)!} x_{{k_i}}^i \big] 
\\ + p(t,x_0)
=  M_{x_0}   +   (\partial_{x_{1}^i}u(x_0))^2 \,  
 \frac{t^{k_i -1 }}{(k_i -1)!} +  p(t,x_0),   \;\; t \ge 0,
\end{gather*} 
where $p(t,x_0)$ is a polynomial in the  $t$-variable which has degree less than 
$k_i-1$. Letting $t \to \infty$ we find a contradiction since  $(\partial_{x_{1}^i}u(x_0))^2 \, 
 \frac{t^{k_i -1}}{(k_i -1)!} + p(t, x_0)$ tends to $\infty$. Hence \eqref{ma} cannot hold and we have proved that $u$ does not depend on the $x_1^i$-variable.
 
Similarly, we prove that    $u$ does not depend on the $x_2^i$-variable as well.
 
Proceeding in finite steps, once  we have proved that $u$   does not depend on the variables $x_1^1, ..., x_{k-1}^i$,   $k=1, ..., k_i-1$, we can show that for any $x_0 \in \R^N$ we have  $\partial_{x^i_k} u (x_0) =0$. To this purpose
 we    argue  by contradiction and suppose that 
\begin{equation}\label{ma1} 
\partial_{x^i_k} u (x_0) \not = 0,
\end{equation}
 for some $x_0 \in \R^N$.
In order to apply  Lemma \ref{n}, we  choose $x$ having 0 in all  the coordinates apart the coordinates 
$x_1^i, ..., x_{k_i }^i $. Moreover,  choosing further  $x_1^1=0, ...,  x_{k_i-1}^i=0$, $x_{k_i}^i = \partial_{x_{k}^i}u(x_0)$  
we find (using that $u$   does not depend on the variables $x_1^1, ..., x_{k-1}^i$)
\begin{gather*} 
u(0, ...,0, \partial_{x_{k}^i}u(x_0) ,0,  ..., 0) \\ \ge M_{x_0}   +   \partial_{x_{k}^i}u(x_0)
\, \big[x_k^i + t x_{x_{k+1}}^i +... + \frac{t^{k_i -k}}{(k_i -k)!} x_{{k_i}}^i \big] 
\\
+ q_k(t,x_0) =  M_{x_0}   +   (\partial_{x_{k}^i}u(x_0))^2 \, 
 \frac{t^{k_i -k}}{(k_i -k)!} + q(t,x_0),  \;\; t \ge 0,
\end{gather*}
where $q(t,x_0)$ is a polynomial in the  $t$-variable which has degree less than 
$k_i-k$.

Letting $t \to \infty$ we find a contradiction since  $(\partial_{x_{k}^i}u(x_0))^2 \, 
 \frac{t^{k_i -k}}{(k_i -k)!} $ $+q(t,x_0)$ tends to $\infty$. Hence \eqref{ma1} cannot hold and we have proved the assertion.

\hh {\it II Step. } We fix $j =1, .., q$  and consider the block $J(0, d_j, g_j)$ (see \eqref{jordan}).  Let  $x_1^j, ..., x_{2 g_j }^j $ be the variables corresponding to  $J(0, d_j, g_j)$.  
Let $x_0 \in \R^N$. We prove that $\partial_{x^j_k} u (x_0) =0$ when $k =1, ..., 2g_j -2$. 
 
\vv We first consider $k=1$. 
 We    argue by contradiction and suppose that 
\begin{equation}\label{ma2}
\partial_{x^j_1} u (x_0) \not = 0,
\end{equation}
 for some $x_0 \in \R^N$. As in I Step  in order to apply  Lemma \ref{n}, 
 we  choose $x$ having 0 in all  the coordinates apart the coordinates 
$x_1^j, ..., x_{2g_j  }^j $. Moreover, we  choose further  
$$
x_1^j=0, ...,  x_{2g_j-1}^j=0, x_{2g_j}^j = \partial_{x_{1}^j}u(x_0)
$$   
  and  set $M_{x_0} = u(x_0) -  Du(x_0)\cdot x_0$.
By considering $t = T_n$, with 
\begin{equation}\label{drf}  
d_j \cdot  T_n = \frac{\pi}{2} + 2n \pi,\;\;\; n \ge 0
\end{equation}
we find 
\begin{gather*}
u(0, ...,0, \partial_{x_{1}^j}u(x_0) ,0,  ..., 0) \ge M_{x_0}     
\\
+ \partial_{x_{1}^j}u(x_0)
\,   \big[x_1^j \cos(d_j  T_n) + x_2^j \sin(d_j  T_n) + x_3^j T_n  \cos(d_j  T_n) + x_4^j T_n\sin(
d_j  T_n)
\\ + ...+ x_{2 g_j -1}^j \frac{ T_n^{g_j -1}}{(g_j -1)!}  \cos(d_j  T_n) + x_{2 g_j }^j 
\frac{ T_n^{g_j -1}}{(g_j -1)!} \sin(d_j  T_n) \big] 
 \\
+ p(T_n,x_0) =  M_{x_0}   +   (\partial_{x_{1}^j}u(x_0))^2 \,  \frac{ T_n^{g_j -1}}{(g_j -1)!}
 + p(T_n,x_0),   \;\; n \ge 0,
\end{gather*} 
where $p(t,x_0)$ is a polynomial in the  $t$-variable which has degree less than 
$g_j-1$. Letting $n \to \infty$ we find a contradiction since  $(\partial_{x_{k}^j}u(x_0))^2 \, 
\frac{ T_n^{g_j -1}}{(g_j -1)!} + p(T_n,x_0)$ tends to $\infty$.  Hence \eqref{ma2} cannot hold and we have proved  
that $u$ does not depend on the $x_1^j$-variable.

\vv   
Similarly, one can prove that    $u$ does not depend on the $x_2^j$-variable as well. We only note that in this case we choose   $x$ having 0 in all  the coordinates apart the coordinates 
$x_1^j, ..., x_{2g_j  }^j $. Moreover, $x_1^j=0, ...,  x_{2g_j-1}^j=0$,  $x_{2g_j}^j = \partial_{x_{2}^j}u(x_0)$ and define  
  $T_n$  such that $d_j  \cdot  T_n =  2n \pi,$ $  n \ge 0$. We  have
\begin{gather*}
u(0, ...,0, \partial_{x_{1}^j}u(x_0) ,0,  ..., 0) \ge M_{x_0} 
\\ 
+ \partial_{x_{2}^j}u(x_0) 
\,   \big[- x_1^j \sin(d_j  T_n) + x_2^j \cos(d_j  T_n)  -x_3^j T_n  \sin(d_j  T_n) + x_4^j T_n\cos(d_j  T_n)  
\\ + ... - x_{2 g_j -1}^j \frac{ T_n^{g_j -1}}{(g_j -1)!}  \sin(d_j  T_n) + x_{2 g_j }^j 
\frac{ T_n^{g_j -1}}{(g_j -1)!} \cos(d_j  T_n) \big] 
 \\  
+ q(T_n,x_0),   \;\; n \ge 0, 
\end{gather*}  
where $q(t,x_0)$ is a polynomial in the  $t$-variable which has degree less than 
$g_j-1$. 
   
\vv 
 Proceeding in finite steps, once  we have proved that $u$   does not depend on the variables $x_1^1,$ ...,$x_{k-1}^j$, $k=1, ..., 2g_j-2,$   we  show that for any $x_0 \in \R^N$ we have  $\partial_{x^j_k} u (x_0) =0$.   To this purpose we suppose that $k$ is even (we can proceed  similarly if $k$ is odd).
  We argue by contradiction and suppose that 
\begin{equation}\label{ma13}
\partial_{x^j_k} u (x_0) \not = 0,
\end{equation}
 for some $x_0 \in \R^N$. 
We  choose $x$ having 0 in all  the coordinates apart the coordinates 
$x_1^j, ..., x_{2 g_j  }^j $. Moreover,   we set   $x_1^1=0, ...,  x_{2 g_j-1}^j=0$, $x_{2g_j}^j = \partial_{x_{k}^j}u(x_0)$.  
 We find (using that $u$   does not depend on the variables $x_1^1, ..., x_{k-1}^j$) with $T_n$  as in \eqref{drf}: 
\begin{gather*}
u(0, ...,0, \partial_{x_{k}^j}u(x_0) ,0,  ..., 0) \ge M_{x_0}    
\\
+ \partial_{x_{k}^j}u(x_0)
\,   \big[x_k^j \cos(d_j T_n) + x_{k+1}^j \sin(d_j  T_n) + x_{k+2}^j T_n  \cos(d_j  T_n) + x_{k+3}^j T_n\sin(d_j  T_n)
\\ + ...+ x_{2 g_j -1}^j \frac{ T_n^{g_j - \frac{k+1}{2} }}{(g_j - \frac{k+1}{2} )!}  \cos(d_j  T_n) + x_{2 g_j }^j 
 \frac{ T_n^{g_j - \frac{k+1}{2} }}{(g_j - \frac{k+1}{2} )!}  \sin(d_j  T_n) \big] 
 \\
+ h(T_n,x_0) =  M_{x_0}   +   (\partial_{x_{k}^j}u(x_0))^2 \,  \frac{ T_n^{g_j - \frac{k+1}{2} }}{(g_j - \frac{k+1}{2} )!}  
 + h(T_n,x_0),     \;\; n \ge 0,   
\end{gather*}
where $h(t,x_0)$ is a polynomial in the  $t$-variable which has degree less than 
$ g_j - \frac{k+1}{2} .$ Letting $n \to \infty$ we find a contradiction since  $(\partial_{x_{k}^j}u(x_0))^2 \,\frac{ T_n^{g_j - \frac{k+1}{2} }}{(g_j - \frac{k+1}{2} )!}$ $ + h(T_n,x_0  )
  $ tends to $\infty$. Hence \eqref{ma13} cannot hold and we have proved the assertion. The proof is complete. \qed 
        

\section{Some open problems}  \label{open}

We list some open problems related to Liouville type theorems for OU  operators.

\hh (1) In general it is not known if under the Kalman condition \eqref{kal}
and the condition $s(A) \le 0$ all non-negative smooth solutions $v$ to
$
Lv =0
$ on $\R^N$ are  constant.    
 This problem is also open in the non-degenerate case  when we assume in addition that $Q$ is positive definite.

\hh (2)
 The papers \cite{Wang11} and \cite{Schilling} treat 
    non-degenerate purely non-local OU operators $L$   
   $$ 
 Lf(x)=  \int_{\R^N} \big(  f (x+y) -  f (x)
   - 1_{ \{  |y| \le 1 \} } \, \langle y , D f (x) \rangle \big)
   \, \nu (dy) + Ax \cdot D f(x), 
 $$
 $x \in \R^N$, $f: \R^N  \to \R$  bounded and  smooth, where 
$\nu$ is a 
 L\`evy measure.
 The hypotheses  of \cite{Schilling} on $\nu$ improve the ones in \cite{Wang11}.
   Theorem 1.1 in \cite{Schilling}  
 shows that under suitable hypotheses on $\nu$ and assuming 
 $ \sup_{t \ge 0}\| e^{tA} \| < \infty$   all  bounded smooth harmonic functions  for $L$ are constant. It is not known if such result holds more generally under the assumption that $s(A) \le 0$  (for instance, a matrix like $A$ in Example \ref{exam} is not covered in \cite{Wang11} and \cite{Schilling}). 
  
\hh (3)  In   \cite{KLP}   
the  result below has been proved  
using probabilistic methods based on the known characterization of recurrence for    OU stochastic processes.
It seems that a purely analytic proof  of such result  is not known.

 \begin{theorem}[Theorem 6.1 in \cite{KLP}]\label{ou2} Let us consider hypoelliptic OU operator $L$.  Let $v: \R^N \to \R$ be a non-negative $C^2$-function such that 
$ L v \le 0$ on $\R^N$. Then $v$ is constant  if the following condition holds:

\vskip 1mm \noindent   The  real Jordan  representation of $B$ is
$ \begin{pmatrix}
 B_0 & 0\\ 
 0 & B_1 
\end{pmatrix}$
where $B_0$ is stable and  $B_1$ is at most of dimension 2 and of the form
 $B_1 = [0]$ or 
 $B_1 =  \begin{pmatrix}
 0  & -\alpha \\
 \alpha & 0
\end{pmatrix}
$
 for some $\alpha \in \R$ (in this case we need $N \ge 2$).
   \end{theorem}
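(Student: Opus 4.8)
The plan is to argue probabilistically, using that under the stated Jordan condition on $A$ the associated Ornstein--Uhlenbeck process is recurrent. Let $X^x_t$ be the OU process solving the SDE \eqref{ou34} on a stochastic basis $(\Omega,\mathcal F,(\mathcal F_t),\P)$; by the Kalman condition \eqref{kal} it has a smooth transition density. First I would show that $t\mapsto v(X^x_t)$ is a non-negative supermartingale. Indeed, localizing It\^o's formula with the stopping times $\tau_n^x$ exactly as in the proof of Proposition \ref{first}, one writes $v(X^x_t)=v(x)+\int_0^t Lv(X^x_s)\,ds+M_t$ with $M_t=\int_0^t Dv(X^x_s)\sqrt Q\,dW_s$ a continuous local martingale; since $v\ge 0$ and $Lv\le 0$, the process $N_t:=v(x)+M_t=v(X^x_t)-\int_0^t Lv(X^x_s)\,ds\ge v(X^x_t)\ge 0$ is a non-negative (hence integrable) local martingale, thus a supermartingale, while $D_t:=\int_0^t Lv(X^x_s)\,ds$ is non-increasing with $|D_t|\le N_t$; therefore $v(X^x_t)=N_t+D_t$ is a non-negative supermartingale and, by the supermartingale convergence theorem, $v(X^x_t)\to Z^x_\infty$ $\P$-a.s.\ as $t\to\infty$ for some a.s.\ finite $Z^x_\infty\ge 0$.

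The key input is then the characterization of recurrence for OU processes: when, in a suitable basis, $A=\mathrm{diag}(B_0,B_1)$ with $B_0$ stable and $B_1=[0]$ or a $2\times 2$ rotation block, the process $X^x_t$ is topologically recurrent, i.e.\ for every $x$, every $y\in\R^N$ and every $\varepsilon>0$ it visits $B(y,\varepsilon)$ along a sequence of times tending to $+\infty$, $\P$-a.s. I would cite this as known (cf.\ \cite{KLP} and the references therein, and Chapter 9 in \cite{Pinskii} for the general principle that a non-negative superharmonic function of a recurrent Markov process is constant) rather than reprove it; the underlying picture is that the coordinates along $B_0$ are tight, converging in law to a genuinely non-degenerate Gaussian (the Kalman condition guaranteeing non-degeneracy in the relevant directions), while the component along $B_1$ is a one-dimensional Brownian motion or a rotating two-dimensional OU process, both neighbourhood-recurrent, and a control-theoretic bookkeeping of the coupling between the two parts --- which do not decouple since $\sqrt Q$ need not be block-diagonal --- upgrades this to topological recurrence of the full process.

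Combining the two facts finishes the proof: fixing $x,y\in\R^N$ and intersecting, over $k\ge 1$, the full-probability events that $X^x_t\in B(y,1/k)$ for arbitrarily large $t$, one finds on an event of probability one a sequence $t_n\to\infty$ with $X^x_{t_n}\to y$, so that $v(X^x_{t_n})\to v(y)$ by continuity of $v$ while also $v(X^x_{t_n})\to Z^x_\infty$; hence $Z^x_\infty=v(y)$ $\P$-a.s. Since $y$ was arbitrary, $v(y_1)=Z^x_\infty=v(y_2)$ a.s.\ for all $y_1,y_2\in\R^N$, i.e.\ $v$ is constant. I expect the recurrence statement of the second paragraph to be the main obstacle: reducing non-negative $L$-superharmonic functions to constants is routine once recurrence is available, but establishing recurrence of a possibly degenerate OU process --- handling the non-block structure of $\sqrt Q$ and promoting neighbourhood recurrence of the central part to topological recurrence of the whole process --- is where the Kalman condition and control theory genuinely enter, and no purely analytic substitute for this step seems to be known.
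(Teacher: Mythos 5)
The statement you are proving is quoted in the paper purely as a cited result (Theorem~6.1 of \cite{KLP}); the paper offers no proof of it, remarking only that it was established in \cite{KLP} ``using probabilistic methods based on the known characterization of recurrence for OU stochastic processes,'' and that a purely analytic proof seems unknown. So there is no in-paper argument to compare against, but your proposal is a faithful reconstruction of exactly the route the paper attributes to \cite{KLP}. Your supermartingale step is sound: $N_t=v(x)+M_t$ is a non-negative local martingale, hence a supermartingale with $\E[N_t]\le v(x)$; since $D_t=\int_0^t Lv(X^x_s)\,ds$ is non-increasing, adapted, and dominated in absolute value by $N_t$ (so integrable), $v(X^x_t)=N_t+D_t$ is a non-negative supermartingale, and the a.s.\ convergence to $Z^x_\infty$ follows from the supermartingale convergence theorem. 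The combination step is also correct: for each fixed $y$, topological recurrence plus a.s.\ convergence give $Z^x_\infty=v(y)$ on a full-probability event, and intersecting over any two $y_1,y_2$ forces $v(y_1)=v(y_2)$. The one genuine gap --- which you flag yourself --- is the recurrence claim under the stated Jordan hypothesis (stable block $\oplus$ $[0]$ or a $2\times2$ rotation block), in the degenerate, non-block-diagonal-$\sqrt Q$ setting guaranteed only by the Kalman condition. That is precisely the nontrivial input the paper defers to \cite{KLP}, so leaving it as a citation is consistent with the paper's own treatment, but your proof is only as complete as that reference; if you wanted to make it self-contained you would need to establish recurrence (e.g.\ via the explicit law $X^x_t\sim N(e^{tA}x,Q_t)$, the positivity of the transition density from hypoellipticity, and a Lyapunov/Chung--Fuchs-type argument for the neutral block), which is the hard part and is not routine.
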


 \appendix

 \section{Proof of Theorem    \ref{ci}}   

The proof of Theorem     \ref{ci}  is based on the following lemma which is a special case of an infinite dimensional result proved in  Section 5 in  \cite{PZproc}. We include the proof for the sake of completeness. 

\begin{lemma} \label{kw} Let $(P_t)$ be the OU semigroup. Assume \eqref{kal} and $s(A) \le 0$. Then 
   for any non-negative Borel function
 $f : \R^N \to \R$, there results: 
 \begin{equation} 
 \label{harmonic}
 P_t f (x+a ) + P_t f(x-a ) \ge 2 C_t(a) \, P_t f(x), \;\; x,\, a \in \R^N,
 \end{equation}
  where $ C_t (a) = \exp [-\frac{1}{2}
|Q_t^{-1/2} e^{tA } a|^2 ],\;  t>0 $
  (note that both sides in \eqref{harmonic} can be $+
  \infty$).   
\end{lemma}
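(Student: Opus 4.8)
The plan is to reduce the claimed inequality to an elementary pointwise estimate on the Gaussian density. Since the Kalman condition \eqref{kal} guarantees that $Q_t$ is positive definite for $t>0$, the measure $N(0,Q_t)$ has the smooth density $g_t(y)=(2\pi)^{-N/2}(\det Q_t)^{-1/2}\exp(-\frac12\langle Q_t^{-1}y,y\rangle)$. After the change of variable $z=e^{tA}x+y$ in \eqref{oo1} one may write, for every non-negative Borel $f$,
$$P_tf(x)=\int_{\R^N}f(z)\, g_t\big(z-e^{tA}x\big)\,dz,$$
and likewise $P_tf(x\pm a)=\int_{\R^N}f(z)\, g_t\big(z-e^{tA}x\mp e^{tA}a\big)\,dz$.

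First I would fix $x,a$, put $w=z-e^{tA}x$ and $b=e^{tA}a$, and observe that it suffices to establish the pointwise inequality
$$g_t(w-b)+g_t(w+b)\ \ge\ 2\,C_t(a)\,g_t(w),\qquad w\in\R^N.$$
Indeed, multiplying by $f(z)\ge 0$ and integrating in $z$ (Tonelli's theorem is what allows both sides to be $+\infty$) yields exactly \eqref{harmonic}.

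Next I would expand the quadratic forms $\langle Q_t^{-1}(w\pm b),w\pm b\rangle=\langle Q_t^{-1}w,w\rangle\pm 2\langle Q_t^{-1}w,b\rangle+\langle Q_t^{-1}b,b\rangle$, using the symmetry of $Q_t^{-1}$, to factor
$$g_t(w-b)+g_t(w+b)=g_t(w)\, e^{-\frac12\langle Q_t^{-1}b,b\rangle}\Big(e^{\langle Q_t^{-1}w,b\rangle}+e^{-\langle Q_t^{-1}w,b\rangle}\Big).$$
The elementary bound $e^{s}+e^{-s}\ge 2$ then gives $g_t(w-b)+g_t(w+b)\ge 2\,e^{-\frac12\langle Q_t^{-1}b,b\rangle}g_t(w)$, and since $\langle Q_t^{-1}b,b\rangle=\langle Q_t^{-1}e^{tA}a,e^{tA}a\rangle=|Q_t^{-1/2}e^{tA}a|^2$, the prefactor is precisely $C_t(a)$, which closes the argument.

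There is essentially no serious obstacle: the whole content is the inequality $e^{s}+e^{-s}\ge 2$ (equivalently, convexity of $s\mapsto e^s$) applied to the Gaussian kernel after the right substitution. The only points needing a little care are the bookkeeping with the inverse covariance — making sure the constant that comes out is exactly $C_t(a)$ — and the integrability/measurability issue, which is handled by Tonelli since everything in sight is non-negative. One may also note that the hypothesis $s(A)\le 0$ plays no role in this lemma itself; it enters only afterwards, via Theorem \ref{nullo}, when one lets $t\to\infty$ and uses $C_t(a)\to 1$ to derive the convexity of $u$ in Theorem \ref{ci}.
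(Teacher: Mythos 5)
Your proof is correct and is essentially the paper's own argument: the paper phrases the factorization as the Cameron--Martin formula for the Radon--Nikodym derivative $\tfrac{dN_{e^{tA}a,Q_t}}{dN_{0,Q_t}}$ and integrates directly, whereas you change variables to expose the same factorization at the level of the Gaussian density, but in both cases the entire content is the identity $g_t(w\pm b)=g_t(w)\,e^{\mp\langle Q_t^{-1}w,b\rangle-\frac12\langle Q_t^{-1}b,b\rangle}$ together with $e^s+e^{-s}\ge 2$. Your remark that $s(A)\le 0$ is not actually used in this lemma (only the Kalman condition, to make $Q_t$ invertible) is also accurate and matches the structure of the paper, where $s(A)\le 0$ enters only via Theorem~\ref{nullo} when passing $t\to\infty$.
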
 
\begin{proof}   We fix $x, a \in \R^N$ and set $N(0,Q_t) = N_{0 ,Q_t}.$
By a direct  computation we have 
\begin{eqnarray*} 
  P_t  f (x+a)=   \int_{\R^N}  f (e^{tA}x  + y )  \frac{ dN
_{e^{tA} a ,Q_t }  } { dN_{0 ,Q_t} }(y) N_{0 ,Q_t}(dy)
\\
=  \int_{\R^N}  f (e^{tA}x  + y   )   \exp
[-\frac{1}{2} |Q_t^{-1/2} e^{tA } a|^2 +  \lan Q_t^{-1/2} e^{tA} a
, Q_t^{-1/2}y \ran ] N_{0 ,Q_t}(dy). 
\end{eqnarray*}
Note that the previous identity also holds in infinite dimensions by the   Cameron-Martin formula (see, for instance, Chapter 1 in \cite{DP02}). 
 It follows that
\begin{eqnarray*}     
\frac {1}{2} ( P_t f (x+a) +  P_t f  (x-a ))
\\
 =  e^{ -\frac{1}{2} |Q_t^{-1/2} e^{tA } a|^2 }\,
   \int_{\R^N}  f (e^{tA}x  + y  )
  \frac{1}{2}
  \Big ( e^{ \lan Q_t^{-1/2} e^{tA} a , Q_t^{-1/2}y \ran }
\\
  + \, e^{ - \lan Q_t^{-1/2} e^{tA} a , Q_t^{-1/2}y \ran } \Big) N_{0 ,Q_t}(dy);
\\
\ge  \exp [-\frac{1}{2} |Q_t^{-1/2} e^{tA } a|^2 ] \,  \int_{\R^N}  f (e^{tA}x  + y  )    N_{0 ,Q_t}(dy)
\\
 = \, C_t (a)\,   P_t f 
(x),\;\;\; \mbox {\rm where } \; \, C_t (a) = \exp [-\frac{1}{2}
|Q_t^{-1/2} e^{tA } a|^2 ]. 
 \end{eqnarray*}
 \end{proof}
{\bf Proof of Theorem \ref{ci}}. Let $u $ be a positive harmonic function for $(P_t)$.   By the previous lemma, we
have, for any $x, a \in \R^N$,
\begin{eqnarray*} 
\frac {1}{2} (u(x+a)  + u (x-a) ) = \frac {1}{2} (P_t u(x+a) + P_t
u (x-a) )
\\
\ge  \exp [-\frac{1}{2} |Q_t^{-1/2} e^{tA } a|^2 ] P_t u (x) =
\exp [-\frac{1}{2} |Q_t^{-1/2} e^{tA } a|^2 ]  u (x).
 \end{eqnarray*}
Passing  to the limit as $t \to \infty$, we infer by
Theorem \ref{nullo} 
 $$
 \frac {1}{2} (u(x+a)  + u (x-a) ) \ge   u
(x),\;\; x,\, a \in {\R^N}.
 $$ 
 By a well-known result due to  W. Sierpi\'ski
this condition together with the measurability of $u$ imply
the convexity of $u$. \qed

\ \hh {\bf Acknowledgments.} The author wishes to
thank  J. Zabczyk  for useful
discussions.


\end{document}